\documentclass{amsart}
\usepackage{amsmath}
\usepackage{amssymb}
\usepackage[dvipdfmx]{graphicx}
\usepackage{amscd}
\usepackage{amsthm}
\usepackage{fullpage}
\usepackage{mathrsfs}
\usepackage{setspace}
\usepackage{mathtools}
\usepackage{xcolor}
\usepackage[all]{xy}
\usepackage{url}
\usepackage{bbm}
\usepackage[colorlinks=true,linkcolor=blue,pagebackref]{hyperref}
\usepackage[poorman]{cleveref}
\usepackage[lite,abbrev,alphabetic]{amsrefs}
\newtheorem{dfn}{Definition}[section]
\newtheorem{thm}[dfn]{Theorem}
\newtheorem{lem}[dfn]{Lemma}
\newtheorem{prop}[dfn]{Proposition}

\newtheorem{ass}{Assumption}
\newtheorem{cond}[ass]{Condition}
\newtheorem{conj}[dfn]{Conjecture}
\numberwithin{equation}{section}


\def\C{\mathbb{C}}
\def\R{\mathbb{R}}
\def\Q{\mathbb{Q}}
\def\Z{\mathbb{Z}}
\def\A{\mathbb{A}}

\def\GL{\mathrm{GL}}
\def\PGL{\mathrm{PGL}}
\def\SL{\mathop{\mathrm{SL}}}

\def\SO{\mathop{\mathrm{SO}}}

\def\d{\,\mathrm{d}}


\def\bs{\backslash}


\def\1{\mathbf{1}}
\def\0{\mathbf{0}}


\def\bsl{\backslash}
\def\inf{\infty}


\def\fo{\mathfrak{o}}

\def\fO{\mathfrak{O}}

\def\fT{{\mathfrak{T}}}

\newcommand{\cE}{\mathcal{E}}
\newcommand{\cF}{\mathcal{F}}

\newcommand{\cH}{\mathcal{H}}

\newcommand{\cL}{\mathcal{L}}

\newcommand{\cO}{\mathcal{O}}
\newcommand{\cP}{\mathcal{P}}
\newcommand{\cQ}{\mathcal{Q}}

\newcommand{\cT}{\mathcal{T}}

\newcommand{\cW}{\mathcal{W}}

\DeclareMathOperator{\vol}{vol}

\DeclareMathOperator{\Hom}{Hom}
\DeclareMathOperator{\Emb}{Emb}

\DeclareMathOperator{\disc}{disc}
\DeclareMathOperator{\level}{level}
\DeclareMathOperator{\sgn}{sgn}
\DeclareMathOperator{\Nm}{Nm}
\DeclareMathOperator{\Tr}{Tr}
\DeclareMathOperator{\Cl}{Cl}

\DeclareMathOperator{\St}{St}

\def\trep{\mathbbm{1}}

\def\bb{\mathbf{b}}
\def\uG{\underline{G}}
\def\Emb{\mathrm{Emb}}
\def\Orb{\mathrm{Orb}}

\def\Tr{\mathop{\mathrm{Tr}}}
\def\Nm{\mathop{\mathrm{Nm}}}

\def\Hom{\mathop{\mathrm{Hom}}}

\def\sgn{\mathop{\mathrm{sgn}}}

\def\vol{\mathop{\mathrm{vol}}}

\def\disc{\mathrm{disc}}

\def\new{\mathrm{new}}
\def\eps{\varepsilon}
\def\St{\mathrm{St}}

\def\alg{\varphi}

\begin{document}

\title{Non-vanishing theorems for prime twists of some modular $L$-functions}

\author{Masataka Chida}
\author{Satoshi Wakatsuki}
\date{\today}

\begin{abstract}
In this paper, we give some non-vanishing results on the central values of prime twists of modular $L$-functions by imaginary quadratic fields for specific elliptic modular forms.
In particular, we show that the central values of prime twists of $L$-functions of some elliptic modular forms are always non-vanishing whenever the root number is positive. 
\end{abstract}

\maketitle

\setcounter{tocdepth}{1}

\tableofcontents

\section{Introduction}
Let $f$ be a newform of even weight $k$ and level $\Gamma_0(N)$.
Let $\pi$ be the cuspidal automorphic representation of $\mathrm{PGL}_2(\mathbb{A})$
generated by $f$, where $\mathbb{A}$ is the ring of ad\`eles of $\mathbb{Q}$.
For a quadratic field $E$ with the discriminant $\Delta_E$, we denote the quadratic character corresponding to
the quadratic extension $E\slash \mathbb{Q}$ by $\eta_E$.
There are a lot of important works concerning on the non-vanishing of the central values of quadratic twists of automorphic $L$-functions $L(\frac{1}{2} , \pi \otimes \eta_E)$.
In particular, Ono and Skinner \cite{OS} proved that
$$
\#\left\{ E \, \middle| \, | \Delta_E|\leq X,\,  L(\tfrac{1}{2} , \pi \otimes \eta_E) \neq 0 \right\} \gg \frac{X}{\log X}.
$$
The Goldfeld's conjecture \cites{OS,K,SWY2} expects
$$\displaystyle
\lim_{X\to \infty}\frac{\#\left\{ E \, \middle| \, | \Delta_E|\leq X, \, L(\frac{1}{2} , \pi \otimes \eta_E) \neq 0\right\} }{\# \{ E \mid |\Delta_E| \leq X\}  }=\frac{1}{2},
$$
and a weaker version of the conjecture (the weak Goldfeld conjecture) is stated as
$$
\#\left\{ E \, \middle| \, | \Delta_E|\leq X, \, L(\tfrac{1}{2} , \pi \otimes \eta_E) \neq 0\right\} \gg X.
$$
This conjecture has been especially considered when $f$ is corresponding to an elliptic curve over $\mathbb{Q}$. It is known that the weak Goldfeld conjecture holds for elliptic curves having a rational $3$-isogeny \cites{J,KL}.
For higher weight modular forms, Kohnen \cite{K} proved that there exists a normalized Hecke eigenform $f$ of level $1$ and weight $4k$ with $k \geq 3$ satisfying the weak version of the conjecture.
In particular, he showed the weak Goldfeld conjecture for
Ramanujan's $\Delta$-function $\Delta (z)=q\prod_{n=1}^\infty(1-q^n)^{24}$.
This result is partially generalized by Makiyama \cite{M}.
More strongly, the following non-vanishing property is expected.
\begin{conj}[\cites{CKRS, MO}]
If $f$ is a newform of level $\Gamma_0(N)$ and even weight $k$ with $k\geq 6$, then there are at most finitely many quadratic fields $E$
such that $\varepsilon (\frac{1}{2} , \pi \otimes \eta_E)=1$ and $L(\frac{1}{2},\pi\otimes \eta_E)= 0$, where $\varepsilon (\frac{1}{2} , \pi \otimes \eta_E)$ is the root number associated to $\pi \otimes \eta_E$.
\end{conj}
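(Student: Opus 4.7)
The plan is to attack the conjecture by combining three ingredients: the Waldspurger--Kohnen--Zagier formula, an algebraic discretization bound on non-zero central values, and a small-value count drawn from the Keating--Snaith random-matrix prediction for orthogonal families. The weight threshold $k \geq 6$ should emerge naturally as the balance point between the discretization lower bound and the CKRS density of small values, so a successful proof should recover (and in fact require) exactly this condition on $k$.

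First I would invoke the Kohnen--Zagier refinement of Waldspurger's formula. After matching local Atkin--Lehner data to the positive-sign condition, one associates to $f$ a distinguished half-integral weight newform $g$ of weight $(k+1)/2$ in the Kohnen plus space such that for fundamental discriminants $D$ in our family,
$$L(\tfrac{1}{2}, \pi \otimes \eta_E) \;=\; \kappa_f \cdot |D|^{-(k-1)/2} \cdot |c_g(|D|)|^2,$$
with $\kappa_f > 0$ depending only on $f$ and $c_g(n)$ the $n$-th Fourier coefficient. Non-vanishing of the central value is therefore equivalent to non-vanishing of $c_g(|D|)$. Next I would exploit algebraicity: after a standard normalization, the $c_g(n)$ lie in the ring of integers of the Hecke field $F = \Q(\{a_p(f)\})$, of some degree $d$. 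The Galois conjugates $g^\sigma$ correspond to the conjugate newforms $f^\sigma$ and each obeys the Deligne--Iwaniec bound $|c_{g^\sigma}(n)| \ll n^{(k-1)/4 + \epsilon}$. Since $\Nm_{F/\Q}(c_g(|D|)) \in \Z$, non-vanishing forces
$$|c_g(|D|)|^2 \;\gg_\epsilon\; |D|^{-(d-1)(k-1)/2 - \epsilon}, \qquad \text{hence} \qquad L(\tfrac{1}{2}, \pi \otimes \eta_E) \;\gg_\epsilon\; |D|^{-d(k-1)/2 + 1/2 - \epsilon}.$$
In the Hecke-rational case $d=1$ this is simply $L \gg |D|^{-(k-1)/2}$.

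Third, and this is the main obstacle, I would need to rule out all but finitely many $D$ with $L$-value smaller than this discretization threshold. The CKRS conjecture, modeled on $\mathrm{SO}(\text{even})$ symmetry, predicts
$$\#\{|D| \leq X : 0 < L(\tfrac{1}{2}, \pi \otimes \eta_E) < T\} \;\sim\; C_f \, X \, T^{1/2} (\log X)^{B}.$$
Inserting $T = c |D|^{-(k-1)/2}$ and summing dyadically gives a total count of order $X^{(5-k)/4}$ in the Hecke-rational case, which is $o(1)$ precisely when $k \geq 6$; the degree-$d$ variant yields an analogous exponent. Because non-zero values cannot lie in this exceptional range, the stated finiteness of zeros would follow at once.

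The hard part is establishing the distributional small-value estimate unconditionally. Existing mollified moment technology (Soundararajan, Soundararajan--Young, Blomer--Mili\'cevi\'c, and their relatives) suffices for weak Goldfeld-type lower bounds but not for counting $L$-values down to the scale $|D|^{-(k-1)/2}$, which lies far below the typical size and requires a mollifier of impractical length together with fine control of negative or fractional moments in the twist family. Even granting GRH, pinning down the predicted $T^{1/2}$ exponent would need information on the value distribution near zero that is presently unavailable. A realistic intermediate target is therefore a conditional result (on GRH and a CKRS-quality mollified second moment), or an unconditional finiteness statement in restricted settings---prime discriminants, small level, or small Hecke-field degree $d$---where Chebotarev-type input and explicit Shimura correspondents can substitute for the full distributional argument.
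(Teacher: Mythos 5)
The statement you were asked about is a \emph{conjecture}, not a theorem: the paper explicitly records it as open (``this conjecture is still open and even any examples of a modular form satisfying the conjecture are not known''), so there is no proof in the paper to compare against. Your proposal is essentially the standard CKRS heuristic that motivates the conjecture --- Waldspurger/Kohnen--Zagier to convert non-vanishing into non-vanishing of a half-integral weight Fourier coefficient, integrality of those coefficients to get a discretization lower bound of size roughly $|D|^{-(k-1)/2}$ on non-zero central values, and a random-matrix small-value count to show that values below that threshold occur only $O(X^{(5-k)/4})$ times, which is $o(1)$ exactly when $k\geq 6$. That is a correct account of \emph{why} the conjecture is believed and why the threshold is $k\geq 6$, but it is not a proof.

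The genuine gap is the one you yourself flag in your final paragraph: the third ingredient, the count of discriminants with $0<L(\frac12,\pi\otimes\eta_E)<T$ at the scale $T\asymp |D|^{-(k-1)/2}$, is precisely the content of the CKRS random-matrix prediction and is completely out of reach of current moment or mollifier technology, even under GRH. So the argument reduces one open conjecture to another of at least comparable depth; nothing is actually established. There are also smaller issues you would need to address in a genuine write-up (the Kohnen--Zagier proportionality holds cleanly only for suitable levels and sign conditions, and the norm argument for the discretization bound needs the coefficients to be algebraic integers after a normalization that does not destroy the Waldspurger constant), but these are secondary to the main obstruction. For contrast, what the paper actually proves are unconditional special cases of a \emph{weak} form of the conjecture (restricted to prime $|\Delta_E|$, for a handful of explicit forms of small level): it works on the definite quaternion side, writes the toric period as a sum of an explicit harmonic polynomial over the $h_E$ points of a class-group orbit of optimal embeddings, and finds an ad hoc congruence forcing that sum to be $\equiv h_E\bmod 2$ (or mod $3$, etc.), whence non-vanishing follows from genus theory. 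That route completely sidesteps any small-value analysis, at the cost of applying only to prime twists of specific forms.
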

However, this conjecture is still open and even any examples of a modular form satisfying the conjecture are not known.
As a weak form of the conjecture, one can expect that there are at most finitely many quadratic fields $E$
such that $|\Delta_E|$ is a prime, $\varepsilon (\frac{1}{2} , \pi \otimes \eta_E)=1$ and $L(\frac{1}{2},\pi\otimes \eta_E)= 0$.
As a related result, Kohnen and Zagier \cite{KZ} proved that $L(\frac{1}{2},\pi\otimes \eta_E)\neq 0$
when $\Delta_E=p$ is a prime with $p\equiv 5 \mod 8$ and $f$ is the Ramanujan's $\Delta$-function.

Our first result supports this conjecture.
\begin{thm}\label{thm:1}
Let $f$ be the newform of weight $8$ and level $\Gamma_0(2)$ which is explicitly given by
$f(z)=\eta(z)^8\eta(2z)^8$, where $\eta (z)$ is the Dedekind $\eta$-function.
Let $\pi$ be the cuspidal automorphic representation of $\mathrm{ PGL}_2(\mathbb{A})$
generated by $f$.
Then for any imaginary quadratic field $E$ such that $-\Delta_E$ is a prime and $\varepsilon (\frac{1}{2}, \pi \otimes \eta_E)=1$,
we have $L(\frac{1}{2},\pi \otimes \eta_E)\neq 0$.
\end{thm}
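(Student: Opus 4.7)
The natural strategy is to apply the Waldspurger formula, transferred to a definite quaternion algebra via Jacquet--Langlands, in order to write $L(\tfrac12,\pi\otimes\eta_E)$ as the square of an explicit toric period, and then to verify non-vanishing of that period for every admissible prime $p=-\Delta_E$.

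Let $B$ be the definite quaternion algebra over $\Q$ ramified exactly at $\{2,\infty\}$; this is the correct transfer target because $\pi_2$ is Steinberg and because the sign condition prescribes the local test vectors. Fix a maximal order $\cO\subset B$. A crucial simplification is that the associated Brandt class set has size one, so the space of weight-$8$ automorphic forms on $B^{\times}$ of level $\widehat{\cO}^{\times}$ corresponding to $\pi$ under Jacquet--Langlands is spanned by a single harmonic polynomial $\phi$ of degree $k-2=6$ on the trace-zero subspace of $B_{\infty}$, invariant under the finite group $\cO^{\times}/\{\pm1\}$. The polynomial $\phi$ can be written down explicitly as the essentially unique invariant of the corresponding finite subgroup of $\mathrm{SU}(2)$ acting on $\mathrm{Sym}^{6}(\C^{2})$.

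For $E$ with $-\Delta_E=p$ prime, the hypothesis $\varepsilon(\tfrac12,\pi\otimes\eta_E)=1$ together with the local root-number computation at $2$ pins $p$ to specific residues modulo~$8$; in particular $E$ then embeds into $B$. Fixing an optimal embedding $\cO_E\hookrightarrow\cO$ and letting $\Cl(\cO_E)$ act on the set of Gross points, the Waldspurger--Gross-type formula (in the form made explicit by Gross, Xue, Martin--Whitehouse and others) yields
$$
L(\tfrac12,\pi\otimes\eta_E) \;=\; C(E)\,\bigl|\,P_E(\phi)\,\bigr|^{2}, \qquad P_E(\phi) \;=\; \sum_{[\fa]\in\Cl(\cO_E)} \phi(x_{\fa}),
$$
with $C(E)>0$ an explicit nonzero constant and $x_{\fa}$ the Gross point associated with the class $[\fa]$. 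The theorem is thus reduced to the purely algebraic assertion that $P_E(\phi)\neq 0$ for every such $p$.

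To establish this non-vanishing, the plan is to identify $P_E(\phi)$, up to a nonzero factor, with the $p$-th Fourier coefficient of the Shintani lift $g$ of $f$ in the Kohnen plus subspace $S_{9/2}^{+}(\Gamma_{0}(8))$, which by Kohnen's theory is determined up to scalar by $f$ and admits the explicit description as the harmonic theta series $g(z)=\sum_{x\in\cO}\phi(x)\,q^{\Nm(x)}$. The main obstacle is the unconditional non-vanishing of $c_{g}(p)$: the values $\phi(x)$ are real but not of uniform sign, so naive positivity is unavailable. The plan is to combine (i) the explicit form of $\phi$ dictated by $\cO^{\times}$-invariance, (ii) the action of $\Cl(\cO_E)$, which is cyclic of odd order by genus theory for prime discriminant, on the Gross points, and (iii) a congruence/integrality argument sensitive to the prime $p$, to coerce $c_{g}(p)$ into a form that is manifestly nonzero for every admissible $p$. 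This explicit, uniform non-vanishing---as opposed to a density or almost-all statement---is the decisive step and the hardest part of the proof.
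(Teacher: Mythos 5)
Your framework is the paper's: transfer to the definite quaternion algebra ramified at $\{2,\infty\}$, exploit class number one, write the period as a finite sum of values of an invariant harmonic polynomial over the $\Cl(\fo_E)$-orbit of Gross/CM points, and invoke Waldspurger to reduce the theorem to the non-vanishing of that sum. (Two small remarks on the setup: on the $3$-dimensional trace-zero space the relevant harmonic polynomial has degree $3$, not $k-2=6$ --- the degree-$6$ count belongs to the $\mathrm{Sym}^{k-2}(\C^2)$ model you mention in parentheses; and the detour through the Shintani lift and the Kohnen plus space is unnecessary, since one can work directly with the quaternionic period, as the paper does.)

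The genuine gap is that the decisive step --- which you correctly identify as the decisive step --- is left as a plan rather than an argument. Saying one will ``combine a congruence/integrality argument sensitive to $p$ to coerce $c_g(p)$ into a form that is manifestly nonzero'' is a restatement of the goal, not a proof; everything before it is standard, and the theorem lives entirely in this last paragraph. What actually closes the gap (Lemma~\ref{lem:d4sum}) is an explicit computation with the $D_4$ lattice: choosing the basis $\bb_1,\bb_2,\bb_3$ of $L(\cO)$, the unique invariant cubic is
$\alg(x)=x_1^3+x_2^3+x_3^3-\sum_{i\neq j}x_i^2x_j+2x_1x_2x_3$, which satisfies
$\alg(a)\equiv(a_1+a_2+a_3)^3 \bmod 4$; the norm condition $\Nm(a_j)=-\Delta_E$ reads
$3a_{j1}^2+3a_{j2}^2+3a_{j3}^2-2a_{j1}a_{j2}-2a_{j2}a_{j3}-2a_{j3}a_{j1}=-\Delta_E$, which forces $(a_{j1}+a_{j2}+a_{j3})^2\equiv\Delta_E\equiv 1\bmod 4$, so every term $\alg\circ\cT(a_j)$ is odd, and the period sum is $\equiv h_E\bmod 2$. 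Combined with the oddness of $h_E$ for prime discriminant (genus theory --- note that only oddness is needed, not the cyclicity you assert) and the root-number computation showing $\varepsilon=1$ forces $\Delta_E\equiv 5\bmod 8$, this gives the uniform non-vanishing. Without exhibiting this congruence (or some substitute for it), your proposal does not prove the theorem; indeed the paper itself stresses that this congruence is ad hoc and that it is unclear when such a congruence exists, so it cannot be waved through as routine.
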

The second theorem obtains a similar result for a weight 4 newform.
It is interesting to compare the following result with the expectation in \cite{CKRS}*{(26)}
\begin{thm}\label{thm:2}
Let $f$ be the newform of weight $4$ and level $\Gamma_0(6)$ which is explicitly given by
$f(z)=\eta (z)^2 \eta(2z)^2\eta (3z)^2 \eta (6z)^2 $.
Let $\pi$ be the cuspidal automorphic representation of $\mathrm{PGL}_2(\mathbb{A})$
generated by $f$.
Then for any imaginary quadratic field $E$ such that $-\Delta_E$ is a prime and $\varepsilon (\frac{1}{2}, \pi \otimes \eta_E)=1$,
we have $L(\frac{1}{2},\pi \otimes \eta_E)\neq 0$.
\end{thm}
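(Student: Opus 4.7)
The plan is to follow the same quaternionic strategy as for Theorem~\ref{thm:1}, adapting it to the weight $4$ level $\Gamma_0(6)$ setting. The core idea is a Waldspurger--Gross type identity that rewrites $L(\tfrac{1}{2},\pi\otimes\eta_E)$ as the square of an arithmetic period on a suitable definite quaternion algebra $B/\mathbb{Q}$, followed by an explicit analysis that uses the primality of $-\Delta_E$ to force this period to be non-zero. The first step is to pin down the correct $B$ by a local root-number calculation: $\pi$ is Steinberg at $2$ and $3$ and holomorphic discrete series of weight $4$ at infinity, and combined with $\eta_E$ being ramified only at $p=-\Delta_E$ and the assumption $\varepsilon(\tfrac{1}{2},\pi\otimes\eta_E)=+1$, the set of places where the local period of $\pi\otimes\eta_E$ lives on the compact side is an even subset of $\{2,3,p,\infty\}$; this is the ramification set of $B$. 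By Jacquet--Langlands, $\pi$ transfers to an automorphic representation $\pi^B$ of $B^\times(\mathbb{A})$ whose archimedean component is the weight-$4$ representation of $B^\times(\mathbb{R})/\mathbb{R}^\times$ on $\mathrm{Sym}^2(\mathbb{C}^2)$.

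The second step is Waldspurger's period formula: fixing a maximal order $\cO\subset B$ and letting $\phi_f$ be a nonzero $\widehat{\cO}^\times$-invariant vector in $\pi^B$, one obtains
\[L(\tfrac{1}{2},\pi\otimes\eta_E)=\kappa\cdot\bigl|P_E(\phi_f)\bigr|^2,\qquad P_E(\phi_f)=\sum_{[\iota]}\phi_f(\iota),\]
with $\kappa>0$ an explicit constant and the sum running over conjugacy classes of optimal embeddings $\iota\colon\cO_E\hookrightarrow\cO$. The problem therefore reduces to verifying $P_E(\phi_f)\ne 0$.

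The third step exploits the primality of $-\Delta_E=p$. This forces $p\equiv 3\pmod 4$, the narrow class group of $E$ has odd order, and Eichler's embedding formula combined with the triviality of the genus group ensures that each right ideal class of $\cO$ receives the same number of optimal embeddings. Using the explicit description of $f$ as an $\eta$-product, $\phi_f$ can be identified as a specific vector in the finite-dimensional space of $\mathrm{Sym}^2$-invariants under $\cO^\times/\{\pm 1\}$, and $P_E(\phi_f)$ collapses to a positive multiple of a single evaluation of a quadratic polynomial in the coordinates of a chosen optimal embedding. A direct computation should then show that this polynomial does not vanish on the locus of embeddings arising from primes $p$ with the required splitting behaviour in $B$.

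The main obstacle will be the third step: one must make the Jacquet--Langlands transfer of $\eta(z)^2\eta(2z)^2\eta(3z)^2\eta(6z)^2$ fully explicit inside the Brandt module of $B$, pin down the coefficients of $\phi_f$ in a concrete basis of $\mathrm{Sym}^2$-invariants, and prove non-vanishing of the resulting form \emph{uniformly} over all admissible primes $p$. Theorem~\ref{thm:1} handled the analogous task for the level-$2$ weight-$8$ form $\eta(z)^8\eta(2z)^8$, where the symmetric power representation is larger but only a single ramified prime enters the quaternion algebra; in the present case $\mathrm{Sym}^2$ is smaller, but the interplay between the two ramified primes $2$ and $3$ makes both the identification of $\phi_f$ and the uniform verification of non-vanishing more delicate.
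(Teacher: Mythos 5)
Your overall frame (Jacquet--Langlands to a definite quaternion algebra, Waldspurger/Gross period over optimal embeddings, primality of $-\Delta_E$ forcing $h_E$ odd) is the right one and matches the paper's, but two of your concrete steps fail. First, the quaternionic setup: since $\pi$ is a twist of Steinberg at both $2$ and $3$ and unramified at $p=-\Delta_E$, the local epsilon factor $\varepsilon(\tfrac12,\pi'_{E_p})$ is always $+1$, so $p$ never enters the ramification set; the admissible definite algebras are ramified exactly at $\{2,\infty\}$ or $\{3,\infty\}$, and which one occurs depends on $\Delta_E \bmod 24$, so you need \emph{two} algebras to cover all $E$ with $\varepsilon(\tfrac12,\pi\otimes\eta_E)=1$ (the paper uses $(\disc(D),\level(\cO))=(2,3)$ for $\Delta_E\equiv 13,21 \bmod 24$ and $(3,2)$ for $\Delta_E\equiv 17\bmod 24$). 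Moreover a \emph{maximal} order cannot work: for $B$ ramified at $\{2,\infty\}$ the $\widehat{\cO}^\times$-invariants of the $\mathrm{Sym}^2$-isotypic space correspond to $S_4^{\new}(\Gamma_0(2))=0$; you must take an Eichler order with $\disc(\cO)=6$ so that the level-$6$ newform appears.

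Second, and more seriously, your step three does not go through as described. The period is $c\sum_{j=1}^{h_E}\alg(\cT(a_j))$ where $\alg$ is a degree-one harmonic polynomial (weight $4$ means $l=1$) and $a_j$ runs over the $\Cl(\fo_E)$-orbit of optimal embeddings; these $h_E$ embeddings are pairwise inequivalent under $\Gamma$, the values $\alg(\cT(a_j))$ are genuinely different and of mixed sign, and there is no collapse to ``a positive multiple of a single evaluation'' and no positivity argument available. The missing idea is a congruence: writing $a_j$ in a $\Z$-basis of $L(\cO)=\{x\in\Z+2\cO\mid \Tr(x)=0\}$, the relation $\Nm(a_j)=-\Delta_E$ forces $\alg(\cT(a_j))^2\equiv -3\Delta_E\bmod 8$ in the first case and $\equiv-\Delta_E\bmod 6$ in the second, so in the admissible residue classes each summand is odd; hence $\sum_j\alg(\cT(a_j))\equiv h_E\bmod 2$, and $h_E$ is odd by genus theory since $-\Delta_E$ is prime. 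Without this term-by-term parity control, oddness of $h_E$ alone does not rule out cancellation in the sum, and your ``direct computation'' has nothing uniform to verify.
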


Here we recall the conjecture by Conrey, Keating, Rubinstein and Snaith on the number of quadratic twists of modular $L$-functions whose central value vanishes.

\begin{conj}[\cites{CKRS,CKRS06}]
\begin{enumerate}
\item Let $f$ be a newform of weight $2$ with rational Fourier coefficients (i.e. a newform corresponding to an elliptic curve over $\mathbb{Q}$) and $\pi$ the cuspidal automorphic representation of $\mathrm{PGL}_2(\mathbb{A})$ generated by $f$. Then there is a constant $c_f^\pm \geq 0$ such that
$$
\# \left\{ E \, \middle| \, |\Delta_E| \textup{ : prime}, \, 0< \pm \Delta_E <X, \, \varepsilon (\tfrac{1}{2}, \pi\otimes \eta_E)=1, \, L(\tfrac{1}{2}, \pi\otimes \eta_E)= 0  \right\} \sim c_f^\pm X^{3\slash 4}(\log T)^{-5\slash 8}.
$$
\item 
Let $f$ be a newform of weight $4$ with rational Fourier coefficients and $\pi$ the cuspidal automorphic representation of $\mathrm{PGL}_2(\mathbb{A})$ generated by $f$. Then there is a constant $c_f^\pm \geq 0$ such that
$$
\# \left\{ E \, \middle| \, |\Delta_E| \textup{ : prime}, \, 0< \pm \Delta_E <X, \, \varepsilon (\tfrac{1}{2}, \pi\otimes \eta_E)=1, \, L(\tfrac{1}{2}, \pi\otimes \eta_E)= 0  \right\} \sim c_f^\pm X^{1\slash 4}(\log T)^{-5\slash 8}.
$$
\end{enumerate}
\end{conj}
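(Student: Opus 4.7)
The final statement is the Conrey--Keating--Rubinstein--Snaith conjecture, which remains open; the best I can propose is the heuristic derivation of the predicted asymptotic, together with an honest account of where a rigorous argument would have to enter. The skeleton of the derivation comes in three steps: (i)~discretize the central value via Waldspurger, (ii)~model the continuous distribution of $L(\tfrac12,\pi\otimes\eta_E)$ by the Keating--Snaith symplectic ensemble, (iii)~cross-reference the two to read off the probability of vanishing and sum over primes.

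For step (i), I would apply the Shimura--Waldspurger--Kohnen--Zagier correspondence to attach to $f$ a cusp form $g$ of half-integral weight $(k+1)/2$ whose Fourier coefficients $c(n)$ satisfy
$$L\!\left(\tfrac12,\pi\otimes\eta_E\right)\ =\ \kappa_f\,\frac{|c(|\Delta_E|)|^2}{|\Delta_E|^{(k-1)/2}}$$
for fundamental discriminants $\Delta_E$ whose sign yields $\varepsilon(\tfrac12,\pi\otimes\eta_E)=+1$. After clearing denominators the $c(n)$ are rational integers, so $L(\tfrac12,\pi\otimes\eta_E)$ either vanishes or has size at least $\kappa_f/|\Delta_E|^{(k-1)/2}$: this is the Waldspurger quantum. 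For step (ii), I would model the values of $L(\tfrac12,\pi\otimes\eta_E)$ with $|\Delta_E|\asymp X$ by the characteristic polynomial at $1$ of a random matrix in $\mathrm{USp}(2N)$ with $N\asymp\log X$; the Keating--Snaith density of the symplectic ensemble combined with Euler-product arithmetic factors produces the CKRS prediction
$$\mathbb{P}\bigl(L(\tfrac12,\pi\otimes\eta_E)\le\delta\bigr)\ \sim\ C\cdot\delta^{1/2}\cdot(\log X)^{3/8}, \qquad \delta\to 0^+,$$
where the exponent $3/8$ encodes the symplectic correction. For step (iii), I equate $\delta$ with the quantum $|\Delta_E|^{-(k-1)/2}$ and restrict to primes $p = \pm\Delta_E \le X$ satisfying the sign and root-number conditions (a set of density $1/2$ among primes by quadratic reciprocity). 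A given admissible prime then contributes a vanishing twist with probability $\asymp p^{-(k-1)/4}(\log X)^{3/8}$; summing by Abel summation against $\pi(x)\sim x/\log x$ yields
$$\sum_{\substack{p\le X\\ \text{admissible}}} p^{-(k-1)/4}\,(\log X)^{3/8}\ \sim\ c_f^{\pm}\cdot X^{(5-k)/4}(\log X)^{-5/8},$$
which specializes to $c_f^{\pm}X^{3/4}(\log X)^{-5/8}$ for $k=2$ and to $c_f^{\pm}X^{1/4}(\log X)^{-5/8}$ for $k=4$, with $c_f^{\pm}$ absorbing $\kappa_f$, the symplectic density constant, and the local densities at the ramified primes.

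The hard part is everything beyond step (i): steps (ii) and (iii) rest entirely on the random matrix heuristic. Even a one-sided bound of the form $O(X^{(5-k)/4+\varepsilon})$ would require sharp average estimates for $|c(p)|^{-s}$ over primes, well beyond current technology, while a matching lower bound demands that the integer coefficients $c(p)$ attain each small integer value with the frequency predicted by the symplectic ensemble, for which there is no known route. Consequently this is a derivation of the conjectural exponents $3/4$, $1/4$ and the log power $-5/8$ rather than an actual proof; any genuine progress appears to require a deep new input on the distribution of Fourier coefficients of half-integral weight forms at prime arguments, together with an unconditional realization of the symplectic statistical model for this family.
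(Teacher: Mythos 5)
This statement is labeled as a conjecture in the paper and is attributed to Conrey--Keating--Rubinstein--Snaith; the paper offers no proof of it (indeed it remains open), so there is nothing to compare your argument against. Your heuristic derivation --- discretizing the central value via the Waldspurger/Kohnen--Zagier formula, invoking the Keating--Snaith symplectic model for the small-value probability $\sim C\delta^{1/2}(\log X)^{3/8}$, and summing $p^{-(k-1)/4}(\log X)^{3/8}$ over admissible primes to get $X^{(5-k)/4}(\log X)^{-5/8}$ --- is exactly the derivation in the cited CKRS papers and correctly reproduces the predicted exponents, and you are right, and appropriately explicit, that steps (ii) and (iii) are heuristics rather than proofs.
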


We remark that the constant $c_f^\pm$ in the above conjecture can be zero.
For the weight $2$ case, Delaunay \cite{Delaunay} gave an example of weight $2$ newform $f$ with $c_f^-=0$.
Similarly, Theorem \ref{thm:2} can be interpreted as an example of weight $4$ newform $f$ with $c_f^-=0$.

Our method of the above results is based on the explicit computation of toric periods of automorphic forms.
Here we briefly explain our strategy.
Let $D$ be a division quaternion algebra over $\mathbb{Q}$
and $\mathcal{O}$ an Eichler order of $D$.
We assume that the class number of $\mathcal{O}$ is one.
Then the space of modular forms on $D$ of level $\mathcal{O}$ has a simple description in terms of harmonic polynomials. 
Let $\pi$ be an irreducible automorphic representation of $( D\otimes_{\mathbb{Q}} \mathbb{A})^\times \slash \mathbb{A}^\times$.
Let $\pi'$ be the cuspidal automorphic representation of $\mathrm{PGL}_2(\mathbb{A})$ corresponding to $\pi$ via Jacquet-Langlands correspondence.
Let $E$ be a quadratic field such that there is an embedding $\iota_0 : E \hookrightarrow D$.
Let $T$ be the torus associated to $\iota_0$.
Then we can define the torus period on $\pi$ with respect to $E$ by
$$
\mathcal{P}_{\iota_0,E}(\phi)=\int_{T(\mathbb{Q})\backslash T(\mathbb{A})}\phi (t)\d t
$$
for $\phi \in \pi$.
By Waldspurger's result, the non-vanishing of the toric period on $\pi$ with respect to $E$ is equivalent to the non-vanishing of $L$-value $L(\frac{1}{2},\pi')L(\frac{1}{2},\pi' \otimes \eta_E)$ under a certain condition on local root numbers (see Theorem \ref{thm:Waldspurger} for details). Hence it is enough to show that the toric period is non-zero on $\pi$.
Since we assumed that the class number of $\mathcal{O}$ is one, we have $G_{\mathbb{A}}=GK$, where $G=D^\times \slash \mathbb{Q}^\times$ and $K$ is the open compact subgroup of $G_{\mathbb{A}}$ determined by $\mathcal{O}$.
Then the space of algebraic modular forms on $G$ of level $\Gamma = G \cap K$ associated with the $(2l+1)$-dimensional irreducible representation of $G_\R$ is identified with the space of $\Gamma$-invariant harmonic polynomials $\mathcal{H}_l^\Gamma$ of degree $l$.
Using this description, we can construct a specific automorphic form $\phi \in \pi$ such that the toric period of $\phi$ is represented by a sum of the values of the harmonic polynomial over CM points (Lemma \ref{lem:periodformula}).
Let $\pi'$ be the automorphic representation generated by $f=f_1$ in Theorem \ref{thm:1}. Then we will show that the quotient of the period is an integer and it is congruent to the class number $h_E$ of $E$ modulo $2$ if $E$ is an imaginary quadratic field such that the discriminant $\Delta_E$ satisfies $\Delta_E \equiv 1 \mod 4$ (Lemma \ref{lem:d4sum}). The computation of root numbers shows that the condition $\varepsilon (\frac{1}{2}, \pi' \otimes \eta_E)=1$ is equivalent to the condition $\Delta_E \equiv 5 \mod 8$. Hence the non-vanishing of the toric period can be reduced to the non-vanishing of the class number $h_E\mod 2$ when the root number is positive.
Then Theorem \ref{thm:1} follows from the genus theory (see the proof of Theorem \ref{thm:disc2}).

The key ingredient of the proof is to find a congruence between the values of the modular form at CM points and the discriminant of the imaginary quadratic field $E$ (see the proof of Lemma \ref{lem:d4sum}).
In particular, we can show that the values of the modular form at CM points are constant modulo $2$ in this case.
The proof of the congruence is elementary, however it is ad hoc. It is not clear that such a congruence always exists.
Hence it may be an interesting problem to consider when such a congruence exists.

To describe the congruence more precisely, we introduce some notations.
Denote $V_D=\{ x\in D \mid \operatorname{Tr}(x)=0\}$, which is a $3$-dimensional $\mathbb{Q}$-vector space. Let $L(\mathcal{O})$ be the lattice in $V_D$ defined by $L(\mathcal{O})=\{ x\in \mathbb{Z}+2\mathcal{O} \mid \mathrm{Tr}(x)=0\}$. We fix an isomorphism $\cT : V_D \overset{\sim}{\longrightarrow}\mathbb{Q}^3$.
Let $\mathfrak{o}_E$ be the ring of integers of $E$ and $\operatorname{Emb}(\mathfrak{o}_E,\mathcal{O})$ the set of optimal embeddings. We fix an optimal embedding $\iota_0 \in \operatorname{Emb}(\mathfrak{o}_E,\mathcal{O})$ and consider the orbit $\operatorname{Orb}(\iota_0)=\{ \iota_1 ,\ldots , \iota_{h_E}\}$ of $\iota_0$ by the ideal class group $\operatorname{Cl}(\mathfrak{o}_E)$.
Then one can take $\{ a_j \}_{1\leq j \leq h_E} \in V_D$ such that $\operatorname{Nm}(a_j)=-\Delta_E$ and $\mathbb{Z}a_j=\iota_j(E)\cap L(\mathcal{O})$.
For each $a \in V_D$ and $\varphi \in \mathcal{H}_l^\Gamma$,
we can associate an automorphic form $\phi=\mathcal{L}_a(\varphi)$ (see the equation (\ref{eqn:lift}) for the precise definition).
Then the toric period of $\phi$ is given by
$$
\mathcal{P}_{\iota_0,E}(\mathcal{L}_{a_0}(\varphi))=c \sum_{j=1}^{h_E}\varphi \circ \mathcal{T}(a_j),
$$
where $c$ is a nonzero constant (Lemma \ref{lem:periodformula}).
Now we consider the case that $\operatorname{disc}(D)=2$ and $\operatorname{level}(\mathcal{O})=1$.
Suppose $l=3$. Then we can find a harmonic polynomial $\varphi \in \mathcal{H}_3^\Gamma$ of degree 3 explicitly given by
$$
\varphi (x)=x_1^3+x_2^3+x_3^3-x_1^2x_2-x_1^2x_3-x_2^2x_1-x_2^2x_3-x_3^2x_1-x_3^2x_2+2x_1x_2x_3.
$$
We set $(a_{j1},a_{j2},a_{j3})=\mathcal{T}(a_j)$.
Since we have $\mathrm{Nm}(a_j)=-\Delta_E$ by Lemma \ref{lem:A},
one has $3a_{j1}^2+3a_{j2}^2+3a_{j3}^2-2a_{j1}a_{j2}-2a_{j2}a_{j3}-2a_{j3}a_{j1}=-\Delta_E$.
Using this relation it is easy to see
$$
(a_{j1}+a_{j2}+a_{j3})^2\equiv \Delta_E\equiv 1 \mod 4, \quad \text{hence} \quad  a_{j1}+a_{j2}+a_{j3} \equiv \pm 1 \mod 4 
$$
(see the proof of Lemma \ref{lem:d4sum} for details).
Hence we have $\varphi \circ \mathcal{T}(a_j)\equiv(a_{j1}+a_{j2}+a_{j3})^3\equiv \pm 1 \mod 4$.
Therefore this congruence gives
$$
\sum_{j=1}^{h_E}\varphi \circ \mathcal{T}(a_j)\equiv h_E \mod 2.
$$
This implies $$
\mathcal{P}_{\iota_0,E}(\mathcal{L}_{a_0}(\varphi))\neq 0
$$
when $\Delta_E$ is a prime and $\Delta_E \equiv 5 \mod 8$, since $h_E$ is odd under this condition.
Hence we have Theorem \ref{thm:1}.

Moreover, we could find similar congruence relations in the following cases:
\begin{enumerate}
\item mod $6$ congruence relation for the newform of weight $4$, level $\Gamma_0(6)$ (this gives Theorem \ref{thm:2}).
\item mod $4$ congruence relation for the newform of weight $6$, level $\Gamma_0(6)$ (see \S 7.1).
\item mod $6$ congruence relation for the newform of weight $6$, level $\Gamma_0(3)$ (see \S 7.2).
\item mod $10$ congruence relation for the newform of weight $4$, level $\Gamma_0(5)$ (see \S 7.3).
\end{enumerate}

We note that it is important to consider a $\mathbb{Q}$-rational structure of the space of modular forms to see such a congruence. In particular, one needs a $\mathbb{Q}$-rational representation on the space of harmonic polynomials.
For this purpose, we will use modular forms on orthogonal groups. 

This paper is organized as follows.
In \S \ref{sec:2}, we review some basic facts of the theory of algebraic modular forms on quaternion algebra that will be needed later.
In \S \ref{sec:3}, we recall Waldspurger's result on toric periods.
Using optimal embeddings, we also give a more precise formula for toric periods.
After recalling the Jacquet-Langlands correspondence and some basic facts on root numbers in \S \ref{sec:JL},
we give the proofs of Theorem \ref{thm:1} and \ref{thm:2} in \S \ref{sec:5} and \S \ref{sec:6} respectively.
Finally, we give further examples of non-vanishing theorem in \S \ref{sec:7}.

\medskip

\noindent
\textbf{Acknowledgments.} 
The authors would like to thank Ken Ono for helpful discussions.
The authors are grateful to John B. Conrey, Jonathan P. Keating, Michael O. Rubinstein and Nina C. Snaith for informing a paper of Delaunay and explaining the relation between their conjecture and our result. 
The first author is partially supported by JSPS Grants-in-Aid for Scientific Research Grant Numbers 22H00096, 23K03038.
The second author is partially supported by JSPS Grants-in-Aid for Scientific Research Grant Numbers 20K03565, 21H00972.

\section{Preliminaries}\label{sec:2}

The cardinality of a finite set $A$ is denoted by $|A|$ or $\#A$.
We denote by $\A$ the ring of ad\`eles of $\Q$.
Let $\A_f$ denote the ring of finite ad\`eles.
Finite places of $\Q$ are identified with primes.
We write the real place of $\Q$ as $\infty$.
Let $\Q_v$ denote the completion of $\Q$ at a place $v$.

\subsection{Quaternion algebra $D$}

Let $D$ be a division quaternion algebra over $\Q$. 
We denote by $\disc(D)$ its discriminant. 
For a place $v$ of $\Q$,  set $D_v=D\otimes_\Q\Q_v$. 
We also set $D_\A=D\otimes_\Q\A$ and $D_{\A_f}=D\otimes_\Q\A_f$.
Choose a standard $\Q$-involution $\sigma$ on $D$, and set $\Nm(x)=x\, \sigma(x)$ and $\Tr(x)=x+ \sigma(x)$ for $x\in D$. 
Throughout this paper, we assume the following condition:
\begin{ass}\label{ass1}
$D$ is definite, that is, $D_\infty$ is isomorphic to the Hamilton's quaternion.
\end{ass}

Let $\cO$ be an Eichler order in $D$.
Denote its level by $\level(\cO)$. 
The discriminant of $\cO$ is defined as $\disc(\cO)=\disc(D)\level(\cO)$.
For a prime $p$,  set $\cO_p=\cO\otimes_\Z\Z_p$.
Then, we have $\cO_p\cong M_2(\Z_p)$ if $p\nmid\disc(\cO)$, $\cO_p$ is the maximal order of $D_p$ if $p\mid \disc(D)$, and $\displaystyle \cO_p\cong \left\{ \left(\begin{smallmatrix} a&b \\ c&d \end{smallmatrix}\right) \in M_2(\Z_p)\; \middle| \;  c\equiv 0 \mod \level(\cO)\Z_p \right\}$ if $p\mid \level(\cO)$.

\subsection{Algebraic group $\uG$}

We denote by $\uG$ the algebraic group over $\Q$ such that $\uG(R)=(D\otimes_\Q R)^\times/R^\times$ for any $\Q$-algebra $R$.
To simplify the notation,  we write the group of $R$-rational points $\uG(R)$ as $G_R$ for any $\Q$-algebra $R$, and we also write the group $\uG(\Q)$ as $G$. 
It is known that $G$ is identified with the automorphism group of $D$ via conjugation. 
For a place $v$ of $\Q$,  set $G_v=\uG(\Q_v)$. 
We use similar notation for subgroups defined over $\Q$. 
The projection map $D_\A^\times\rightarrow G_\A$ is written as $x\mapsto\bar{x}$.
We denote the image of a subset $Y$ of $D_\A^\times$ by $\overline{Y}$.

For each place $v$,  let $K_v$ be the open compact subgroup of $G_v$ given by $K_v=\overline{(\cO_v^\times)}$ if $v$ is finite and $K_\infty=G_\inf$.
Then $K=\prod_vK_v$ (resp. $K_f=\prod_{v<\inf}K_v$) is an open compact subgroup of $G_\A$ (resp. $G_{\A_f}$).


Throughout this paper, we also assume the following:
\begin{ass}\label{ass2}
The class number of $\cO$ is $1$, that is, we have $G_\A=G K$.
\end{ass}

The list of Eichler orders of class number one is as follows:
\[
(\disc(D),\level(\cO)) = \begin{array}{l} (2,1),  \;\; (2,3), \;\; (2,5), \;\; (2,11), \;\; (2,9), \\ (3,1), \;\; (3,2), \;\; (3,4), \\ (5,1), \;\; (5,2), \\ (7,1), \\ (13,1) \end{array}
\]
cf. Voight's homepage \cite{Voight2}.

\subsection{Harmonic polynomials} 

Set $V_D=\{x\in D \mid \Tr(x)=0\}$. 
Then, $V_D$ is a $3$-dimensional vector space over $\Q$, and a quadratic form $Q:V_D\times V_D\to \Q$ is defined by
\[
 Q(x,y)= \frac{1}{2} \left\{ \Nm(x+y)-\Nm(x)-\Nm(y) \right\} \qquad (x,y\in V_D).
\]
The group $G$ acts on $V_D$ as $x\cdot g=g^{-1}xg$ $(x\in V_D, \;\; g\in G)$. 
Since $\Nm(x\cdot g)=\Nm(x)$, we obtain the isomorphism
\[
G\cong \SO(V_D,Q)=\{ g\in \SL(V_D) \mid Q(x\cdot g,y\cdot g)=Q(x,y) \;\; \textup{for all } x,y\in V_D\}.
\]
Fix a basis $\bb_1$, $\bb_2$, $\bb_3$ of $V_D$, and we define the isomorphism $\cT : V_D \stackrel{\sim}{\longrightarrow} \Q^3$ by 
\[
\cT(a_1\bb_1+a_2\bb_2+a_3\bb_3)=(a_1,a_2,a_3) .
\]
A symmetric matrix $\cQ$ of degree $3$ is defined by $\cQ=(Q(\bb_i,\bb_j))_{1\le i,j\le 3}$, and set 
\[
\SO(\cQ)=\{g\in\mathrm{SL}_3 \mid g\cQ\, {}^t\!g=\cQ\}.
\]
A morphism $\cF: \, G \to \SO(\cQ)$ over $\Q$ is given by
\[
\cT(g^{-1} x g) = \cT(x)\,  \cF(g), \qquad x\in V_D, \quad g\in G.
\]
We let $\cW_l=\{ \alg\in \Q[x_1,x_2,x_3] \mid \alg$ is homogeneous and $\deg \alg=l\}$. 
A Laplace operator $\Delta_\cQ:\cW_l \to \cW_{l-2}$ is defined by
\[
\Delta_\cQ=\sum_{1\le i,j\le 3} \mathfrak{q}_{ij}\, \frac{\partial^2}{\partial x_i \, \partial x_j} ,\quad \textup{where } \cQ^{-1}=(\mathfrak{q}_{ij})_{1\le i,j\le 3}.
\]
A vector space $\cH_l$ of harmonic polynomials of degree $l$ is the kernel of $\Delta_l$, that is,
\[
\cH_l=\{ \alg\in \cW_l \mid \Delta_\cQ \alg=0\}.
\]
The group $G$ acts on $\cW_l$ by $(g\cdot \alg)(x)=\alg(x\cF(g))$ $(f\in\cW_l$, $g\in G)$, and the action of $G$ commutes with $\Delta_\cQ$. 
Hence, a $\Q$-rational representation $\rho_l$ of $G$ on $\cH_l$ can be defined by
\[
(\rho_l(g)\alg)(x_1,x_2,x_3)=\alg((x_1,x_2,x_3)\cF(g)) ,\quad \alg\in \cH_l,\;\; g\in G.
\]
It is known that $\dim \cH_l=2l+1$ and $\rho_l$ is irreducible. 
Each polynomial $\alg$ in $\cH_l$ is identified with the polynomial function as $\Q^3\ni a\mapsto \alg(a) \in \Q$. 
Then, a $\Q$-rational representation $\varrho_l$ of $G$ on $H_l= \{ \alg\circ \cT \mid \alg\in \cH_l\}$ is given by
\[
(\varrho_l(g)\cdot\psi)(x)=\psi(x\cdot g) \qquad (\psi\in H_l, \;\; g\in G, \;\; x\in V_D),
\]
and we have $\varrho_l(g)(\alg\circ \cT)=(\rho_l(g)\alg)\circ \cT$.

\subsection{Algebraic modular forms}

Let $\cO^\times$ denote the group of unit elements in $\cO$. 
By Assumption \ref{ass1}, $\#\cO^\times$ is finite and we have $\cO^\times=\{ x\in\cO \mid \Nm(x)=1\}$.

Let $\Gamma=G\cap  K=\overline{\cO^\times}$. Then, $\Gamma$ is a finite subgroup of $G$. 
Set
\[
\cH_l^\Gamma=\{ \alg\in \cH_l \mid \rho_l(\gamma)\alg=\alg \, \, \textup{for all }\gamma\in\Gamma \}, \quad H_l^\Gamma=\{ \alg\circ \cT \mid \alg\in\cH_l^\Gamma\}.
\]
Any function $\psi\in H_l^\Gamma$ satisfies $\varrho_l(\gamma)\psi=\psi$ for all $\gamma\in\Gamma$.
We identify $\cH_l^\Gamma$ with $H_l^\Gamma$ by $\alg\mapsto \alg\circ\cT$. 
The functions in $\cH_l^\Gamma\otimes\C$ and $H_l^\Gamma\otimes\C$ are called algebraic modular forms.

\subsection{Lift $\cL_a$ from $\cH_l^\Gamma$ to $L^2(G\bsl G_\A/ K_f)$}\label{sec:lift}

Fix an element $a\in V_D$ and set $\alpha=\cT(a)\in \Q^3$. 
By Assumption \ref{ass1}, for each $\alg\in\cH_l^\Gamma\otimes\C$, the function
\[
G_\inf\ni g_\inf \mapsto \alg(\alpha\, \cF(g_\inf^{-1})) \in \C
\]
belongs to $L^2(\Gamma\bsl G_\inf)$. 
By Assumption \ref{ass2}, we have the diffeomorphism $\Gamma\bsl G_\inf \cong G\bsl G_\A / K_f$. 
Hence, a linear mapping $\cL_a: \cH_l^\Gamma\otimes\C \to L^2(G\bsl G_\A/K_f)$ is defined by
\begin{equation}\label{eqn:lift}
\cL_a(\alg)(\gamma g_\inf k_f)=\alg(\alpha\, \cF(g_\inf^{-1})) \qquad (\alg\in\cH_l, \;\; g_\inf\in G_\inf, \;\; \gamma\in G, \;\; k_f\in K_f).
\end{equation}
When $\alpha\neq (0,0,0)$, we note that $\alg\not\equiv 0$ if and only if $\cL_a(\alg)\not\equiv 0$. 
Hence, $\cL_a$ is injective if $\alpha\neq (0,0,0)$.

\subsection{Hecke operators}

For a prime $p$ with $p\nmid \disc(\cO)$, 
we set
\[
\fT_p=\{\overline{x} \mid x\in\cO,\;\;  \Nm(x)= p \} \subset G.
\]
Since $\fT_p$ is bi-$\Gamma$-invariant and a finite subset of $G$, we can define a linear operator $T_p$ on $\cH_l^\Gamma$ by
\[
(T_p \alg)(x)=\sum_{\gamma\in \Gamma\bsl \fT_p} \alg(x\cF(\gamma^{-1})) \quad (\alg\in \cH_l^\Gamma, \;\; x\in\Q^3).
\]
Note that the operator $T_p$ commutes with $\Delta_\cQ$, because the action of $G$ commutes with $\Delta_\cQ$. 
Furthermore, $T_p$ is extended to a linear operator on $\cH_l^\Gamma\otimes \C$.

By Assumption \ref{ass2}, we have
\[
\fT_p=G\cap (K_f\, \cT_p), \quad \text{where $\cT_p=K_p\overline{\left(\begin{smallmatrix} 1&0 \\ 0 & p \end{smallmatrix}\right)}K_p$.}
\]
Take a set of representatives $\{\beta_j\}$ of $\cT_p/K_p$ and define a linear operator $\mathbf{T}_p$ on $L^2(G\bsl G_\A/ K_f)$ by 
    \[
    (\mathbf{T}_p \phi)(x)=\sum_j \phi(x\beta_j),  \qquad \phi\in L^2(G\bsl G_\A/ K_f).
    \]
Then, we see $\cL_a(T_p\alg)=\mathbf{T}_p\cL_a(\alg)$ by the definition for $\cL_a$. 
For any primes $q$ with $q\nmid \disc(\cO)$, it is obvious that $\mathbf{T}_p$ commutes with $\mathbf{T}_q$, and hence $T_p$ also commutes with $T_q$. 
Therefore, we can simultaneously diagonalize the $T_p$'s for all primes $p$ with $p\nmid \disc(\cO)$.  
If $\alg$ is a simultaneous eigenfunction, then $\alg$ is said to be a Hecke eigenform. 
\begin{lem}\label{lem:auto}
Suppose that $\alg$ is not constant and $\alg$ is a Hecke eigenform in $\cH_l^\Gamma\otimes\C$. 
Then, there exists a unique irreducible automorphic representation $\pi$ of $G_\A$ such that $\pi$ is not $1$-dimensional and $\cL_a(\alg)$ belongs to $\pi$ for any $a\in V_D$.
\end{lem}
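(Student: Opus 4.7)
The plan is to exhibit $\cL_a(\alg)$ as a Hecke eigenform inside the spectral decomposition of $L^2(G\bsl G_\A)$ and isolate a unique irreducible summand using strong multiplicity one together with the Jacquet--Langlands correspondence. Since $D$ is division, the quotient $G\bsl G_\A$ is compact, so $L^2(G\bsl G_\A)$ decomposes discretely as $\bigoplus_\pi m(\pi)\pi$ with finite multiplicities. By construction $\cL_a(\alg)$ is right $K_f$-invariant, and the identity $\mathbf{T}_p\cL_a(\alg)=\cL_a(T_p\alg)$ recorded just before the lemma shows that it is a simultaneous eigenvector of $\mathbf{T}_p$ for every prime $p\nmid\disc(\cO)$ with eigenvalues $\lambda_p$ depending only on $\alg$.

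Next I would project $\cL_a(\alg)=\sum_i\phi_i$ onto the irreducible summands $\pi_i$. Each non-zero $\phi_i$ still lies in $\pi_i^{K_f}$ and is an eigenvector of every $\mathbf{T}_p$ with the same $\lambda_p$, so every such $\pi_i$ carries a $K_f$-fixed Hecke eigenvector with the prescribed eigenvalues at all unramified primes. Applying the Jacquet--Langlands correspondence (recalled in \S\ref{sec:JL}) together with strong multiplicity one and multiplicity one on $\mathrm{PGL}_2(\A)$, at most one isomorphism class of automorphic representation of $G_\A$ can carry such a vector, and it occurs in $L^2(G\bsl G_\A)$ with multiplicity one. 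This singles out a unique $\pi$ with $\cL_a(\alg)\in\pi$; because the $\lambda_p$ depend only on $\alg$, the same $\pi$ works for every $a\in V_D$, and uniqueness follows from the fact that distinct irreducible summands of $L^2$ have trivial intersection (applied to any $a$ for which $\cL_a(\alg)\not\equiv 0$, which exists because a non-zero harmonic polynomial of positive degree is non-zero on every sphere of positive radius in $\R^3$).

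Finally, to rule out $\dim\pi=1$: any one-dimensional automorphic representation of $G_\A$ is a character, and since $G_\inf\cong\SO(3)$ is compact and simple, its restriction to $G_\inf$ is trivial. For $a\in V_D$ with $\cT(a)\ne 0$ the orbit $\{\cT(a)\cF(g_\inf^{-1}) : g_\inf\in G_\inf\}$ is a sphere in $\R^3$, and $\alg$ is non-constant on every such sphere since it is a non-constant harmonic polynomial of positive degree (recall $l\ge 1$). Hence $g_\inf\mapsto\cL_a(\alg)(g_\inf)$ is non-constant on $G_\inf$, contradicting triviality of any character there, so $\pi$ cannot be one-dimensional. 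The main obstacle is the invocation of Jacquet--Langlands together with strong multiplicity one and multiplicity one for $\mathrm{PGL}_2$; the rest is a formal spectral decomposition and Hecke eigenvalue matching.
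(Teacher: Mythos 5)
Your proposal is correct and is essentially the paper's argument: the paper's entire proof is the one-line remark that the strong multiplicity one theorem (via Jacquet--Langlands, citing Knapp--Rogawski) implies the assertion, and your write-up simply fills in the standard details (discrete decomposition of $L^2(G\bsl G_\A)$ for the compact quotient, Hecke eigenvalue matching, and exclusion of characters via the archimedean component). The only cosmetic point is ordering: since Jacquet--Langlands applies only to non-one-dimensional constituents, the observation that no constituent receiving a nonzero component of $\cL_a(\alg)$ can be a character (because such a component transforms under $G_\inf$ by $\rho_l$ with $l\ge 1$) is best made \emph{before} invoking strong multiplicity one, rather than only for the final $\pi$.
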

\begin{proof}
The strong multiplicity one theorem implies the assertion, see e.g. \cite{KR}.
\end{proof}

\section{Toric periods of modular forms}\label{sec:3}

\subsection{Embedding of quadratic fields}

Let $X(D)$ be the set of quadratic fields which are embedded in $D$. 
Here, the word ``embedding'' means an injective homomorphism from $E$ to $D$ over $\Q$, that is, for each $\iota\in \Emb(E,D)$, we have $\iota(k)=k$ (for all $k\in\Q)$, $\iota(x+y)=\iota(x)+\iota(y)$, $\iota(xy)=\iota(x)\iota(y)$ (for all $x,y\in E$).
Note that any real quadratic field can not be embedded into $D$, since $D$ is definite.  
Throughout this paper, any imaginary quadratic field is regarded as a subfield of $\C$.
For $E\in X(D)$, we denote by $\Delta_E$ the fundamental discriminant of $E$. 
Then, $\Delta_E$ is a negative integer and we have $E=\Q+\Q\cdot((\Delta_E+\sqrt{\Delta_E})/2)$. 
For $E\in X(D)$,  we write the set of embeddings $E \hookrightarrow D$ by $\Emb(E,  D)$.
By the Skolem-Noether theorem,  $D^\times$ acts transitively on $\Emb(E,  D)$ by conjugation.

Take a quadratic field $E\in X(D)$ and an embedding $\iota\in\Emb(E,D)$.
If $\omega$ is the non-trivial element of the Galois group of $E$ over $\Q$, then we have $\omega(x)=\overline{x}$ where $\overline{x}$ is the complex conjugation of $x$. 
Since $E\cong\iota(E)/\Q$ and $\sigma$ is a non-trivial isomorphism over $\iota(E)$ over $\Q$, we find $\sigma(\iota(x))=\iota(\omega(x))$ for arbitrary $x\in E$. 
Hence, we have $\Nm(\iota(x))=\iota(|x|^2)$ and $\Tr(\iota(x))=\iota(x+\overline{x})$ for all $x\in E$.

\subsection{Toric periods}\label{subsec:Toric periods}

We fix $E\in X(D)$ and an embedding $\iota_0\in \Emb(E,D)$.
Let $\underline{T}$ be the subtorus of $\underline{G}$ such that $\underline{T}(R)=(\iota_0(E)\otimes_\Q R)^\times/R^\times$ for any $\Q$-algebra $R$.
We write $T$ (resp. $T_R$) for $\underline{T}(\Q)$ (resp. $\underline{T}(R)$). 
Let $\d t=\prod_v \d t_v$ be the Tamagawa measure on $T_\A$.
The toric period of $\phi \in L^2(G\bsl G_\A)$ with respect to $E$ is defined by the integral
     \[
     \cP_{\iota_0,  E}(\phi)=\int_{T\bs T_\A}\phi(t) \d t. 
     \]  
Let $\pi=\otimes_v\pi_v$ be an irreducible automorphic representation of $G_\A$, which is not a character. 
A linear form $\cP_{\iota_0,E}\colon \pi\to\C$ is defined by the integral $\cP_{\iota_0,E}(\phi)$ $(\phi\in \pi)$. 
Note that the property that $\cP_{\iota_0,  E}\not\equiv0$ on $\pi$ is independent of the choice of $\iota_0$.
Hence we write $\cP_E\not\equiv0$ on $\pi$ if $\cP_{\iota_0,  E}\not\equiv0$ on $\pi$ for some $\iota_0$.
We say that $\pi$ is $E^\times$-distinguished if $\cP_E\not\equiv 0$. 
Let $\pi'=\otimes_v\pi'_v$ be the Jacquet-Langlands transfer of $\pi$ to $\PGL_2(\A)$.
Take a function $\phi\in\pi$ with decomposition $\phi=\otimes_v\phi_v$. 
We set $E_v=E\otimes_\Q\Q_v$ for each place $v$ of $\Q$. 
Waldspurger \cite{Wal2} proved the relation between the toric period and the central $L$-value of $\pi$.
\begin{thm}[Waldspurger]\label{thm:Waldspurger}
If $\cP_E\not\equiv0$ on $\pi$,  then the ramification set of $D$ coincides with the set of places $v$ at which we have $\varepsilon(\frac12,\pi'_{E_v})=-1$, where $\varepsilon(\frac12,\pi'_{E_v})$ is the root number of the local base change of $\pi_v'$ to $\PGL_2(E_v)$.
Conversely,  if this condition is satisfied,  then $\cP_E\not\equiv0$ on $\pi$ if and only if $L(\tfrac12,  \pi')L(\tfrac12,  \pi'\otimes\eta_E)\neq0$. 
\end{thm}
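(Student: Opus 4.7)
The plan is to reproduce Waldspurger's theta-correspondence strategy. The geometric input is the identification $G\cong\SO(V_D)$ together with the orthogonal decomposition $V_D=V_1\perp V_2$ induced by $\iota_0:E\hookrightarrow D$, where $V_1$ is the one-dimensional anisotropic trace-zero line in $\iota_0(E)$ and $V_2$ is two-dimensional with $T\cong\SO(V_2)$. This decomposition realizes a seesaw dual pair relating $\SO(V_D)$, $\SO(V_1)\times\SO(V_2)$, and the metaplectic group $\widetilde{\SL}_2(\A)$, on which the Shimura-Shintani-Waldspurger correspondence produces the half-integral weight counterpart of $\pi'$.

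First, I would construct the theta lift $\theta_\phi$ of $\phi\in\pi$ to $\widetilde{\SL}_2(\A)$ using a factorizable Schwartz-Bruhat function on $V_D(\A)$, and apply the seesaw identity to rewrite $\cP_{\iota_0,E}(\phi)$ as a pairing of $\theta_\phi$ against the theta series attached to $V_1$, which up to normalization is the quadratic theta series associated to $\eta_E$. Second, I would invoke the Rallis inner product formula: it expresses $\langle\theta_\phi,\theta_\phi\rangle$ in terms of $L(\tfrac12,\pi')$, and the pairing with the $V_1$-theta series extracts the twisted factor $L(\tfrac12,\pi'\otimes\eta_E)$, yielding a formula of the shape
\[
|\cP_{\iota_0,E}(\phi)|^2 \;=\; C\cdot L(\tfrac12,\pi')\, L(\tfrac12,\pi'\otimes\eta_E)\, \prod_v \alpha_v(\phi_v),
\]
where the $\alpha_v$ are normalized local toric integrals. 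Third, I would apply the Tunnell-Saito local dichotomy: the space $\Hom_{T_v}(\pi_v,\C)$ is one-dimensional precisely when $\varepsilon(\tfrac12,\pi'_{E_v})\eta_{E_v}(-1)$ matches the Hasse invariant of $D_v$, and vanishes otherwise. If the ramification set of $D$ does not coincide with the set of $v$ where $\varepsilon(\tfrac12,\pi'_{E_v})=-1$, some $\alpha_v$ vanishes identically and $\cP_E\equiv 0$; otherwise one can choose test vectors making every $\alpha_v$ nonzero, so $\cP_E\not\equiv 0$ reduces to nonvanishing of the central $L$-value product.

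The main obstacle is the local analysis that produces and controls the factors $\alpha_v$. One needs the Tunnell-Saito dichotomy in sharp form, whose proof requires a case-by-case representation-theoretic study of $\Hom_{T_v}(\pi_v,\C)$ for principal series, special, and supercuspidal $\pi_v$, together with the epsilon-dichotomy identity governing the parity of $\dim\Hom_{T_v}(\pi_v,\C)+\dim\Hom_{T_v}(\mathrm{JL}(\pi_v),\C)$. This identity, together with careful normalizations of local measures and the archimedean computation (where $D_\infty$ is the Hamilton quaternions and $E_\infty=\C$), is what makes the global formula come out clean, and it constitutes the technical heart of Waldspurger's paper.
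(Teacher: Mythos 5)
The paper offers no proof of this statement: it is quoted verbatim from Waldspurger \cite{Wal2} (``Waldspurger proved the relation between the toric period and the central $L$-value''), so there is no internal argument to compare yours against. Your outline --- the seesaw dual pair coming from $V_D=V_1\perp V_2$ with $T\cong\SO(V_2)$, the Rallis inner product formula producing $L(\tfrac12,\pi')L(\tfrac12,\pi'\otimes\eta_E)$, and the Tunnell--Saito epsilon-dichotomy governing the local factors $\alpha_v$ (which by itself already gives the first assertion of the theorem, since $\cP_E\not\equiv 0$ forces local distinction at every place) --- is a faithful summary of the standard proof in the literature, with the caveat that it is a roadmap rather than a proof: the entire technical content is deferred to the cited sources, exactly as the paper itself defers it.
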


\subsection{Optimal embedding and toric period}\label{sec:periodformula}

Let $\fo_E$ denote the ring of integers in $E\in X(D)$. 
We write $\Emb(\fo_E,\cO)$ for the set of embedding $\iota\in \Emb(E,D)$ such that $\iota(E)\cap \cO$ is isomorphic to $\fo_E$ over $\Z$. 
An embedding in $\Emb(\fo_E,\cO)$ is said to be optimal.

We define a lattice $L(\cO)$ in $V_D$ by $L(\cO)=\{ x\in\Z+2\cO \mid \Tr(x)=0 \}$. 

\begin{lem}\label{lem:A}
Let $\iota\in\Emb(\fo_E,\cO)$ be an embedding. 
Then, we have $\iota(\fo_E)=\iota(E)\cap \cO$, and hence there exists an element $a\in \iota(E)\cap V_D$ such that $\Nm(a)=-\Delta_E$. 
Moreover, such element $a$ belongs to $\iota(E)\cap L(\cO)$, and we have $\Z a=\iota(E)\cap L(\cO)$. 
\end{lem}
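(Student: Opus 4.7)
The plan is to verify the four assertions in order, starting with identifying the image of $\fo_E$ and then carefully analyzing what trace-zero elements of $\iota(E)$ sit inside $L(\cO)$.

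First I would establish $\iota(\fo_E)=\iota(E)\cap\cO$. The right-hand side is a $\Z$-order in $\iota(E)\cong E$ which, by the optimality assumption, is abstractly isomorphic to $\fo_E$. Since $\fo_E$ is the maximal order of $E$, the subring $\iota(E)\cap\cO$ must be integrally closed in $\iota(E)$ and hence coincides with the unique maximal order $\iota(\fo_E)$. Next, to produce the element $a$, I would set $\tau=(\Delta_E+\sqrt{\Delta_E})/2\in\fo_E$ and take $a=\iota(\sqrt{\Delta_E})=2\iota(\tau)-\Delta_E$. Using the compatibility $\sigma\circ\iota=\iota\circ\omega$ recalled just before the statement, together with $\omega(\sqrt{\Delta_E})=-\sqrt{\Delta_E}$, I get $\Tr(a)=0$ and $\Nm(a)=-\Delta_E$, so $a\in \iota(E)\cap V_D$. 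From the formula $a=-\Delta_E+2\iota(\tau)$ and the inclusion $\iota(\tau)\in\iota(\fo_E)\subseteq\cO$ just proved, $a$ lies in $\Z+2\cO$ and hence in $L(\cO)$.

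The remaining, and most delicate, step is $\Z a=\iota(E)\cap L(\cO)$. I would first establish the auxiliary identity
\[
(\Z+2\cO)\cap\iota(E)=\Z+2\iota(\fo_E).
\]
The inclusion $\supseteq$ is immediate. For $\subseteq$, write $x=n+2y$ with $n\in\Z$ and $y\in\cO$; since $x\in\iota(E)$, also $2y\in\iota(E)\cap 2\cO$, and from $\iota(E)\cap\cO=\iota(\fo_E)$ one checks $\iota(E)\cap 2\cO=2\iota(\fo_E)$, forcing $y\in\iota(\fo_E)$. Then any $b\in\iota(E)\cap L(\cO)$ can be written $b=m+2n\,\iota(\tau)$ with $m,n\in\Z$, and the trace-zero condition $\Tr(b)=2m+2n\Delta_E=0$ gives $m=-n\Delta_E$, so $b=n(-\Delta_E+2\iota(\tau))=na$. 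Combined with the containment $\Z a\subseteq \iota(E)\cap L(\cO)$ already verified, this yields the claimed equality.

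The main obstacle is the last step: one must be careful not to confuse ``$y\in\cO$'' with ``$y\in\iota(E)\cap\cO$'', and the clean identification $\iota(E)\cap 2\cO=2\iota(\fo_E)$ is the lemma that makes the computation go through. Once that is in hand, reducing to the $\Z$-basis $\{1,\iota(\tau)\}$ of $\iota(\fo_E)$ and solving the linear trace-zero condition is routine.
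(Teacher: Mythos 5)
Your proof is correct and follows essentially the same route as the paper's: optimality plus maximality of $\fo_E$ gives $\iota(\fo_E)=\iota(E)\cap \cO$, the element $a$ is in effect $\iota(\sqrt{\Delta_E})$, and the final equality reduces to identifying the trace-zero elements of $\Z+2\iota(\fo_E)$. The only difference is cosmetic: by using the uniform generator $\tau=(\Delta_E+\sqrt{\Delta_E})/2$ you avoid the paper's case split on $\Delta_E \bmod 4$, and your auxiliary identity $(\Z+2\cO)\cap\iota(E)=\Z+2\iota(\fo_E)$ is the paper's claim $\iota(\fo_E)\cap L(\cO)=A$ with the trace condition stripped off.
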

\begin{proof}
Since $\iota(E)\cap \cO$ is isomorphic to $\fo_E$ over $\Z$, there exists an element $b\in \iota(E)\cap \cO$ such that 
\[
\iota(E)\cap \cO=\Z+\Z b , \quad 
\Tr(b)=\begin{cases} 0 & \text{if $\Delta_E\equiv 0 \mod 4$}, \\ 1 & \text{if $\Delta_E\equiv 1 \mod 4$,} \end{cases}
\]
\[
\Nm(b)=\begin{cases} -\Delta_E /2 & \text{if $\Delta_E\equiv 0 \mod 4$}, \\ (1-\Delta_E)/4 & \text{if $\Delta_E\equiv 1 \mod 4$.} \end{cases}
\]
There exists an element $\alpha\in E\subset \C$ such that $b=\iota(\alpha)$.
Then we have $\Nm(b)=|\alpha|^2$, and therefore $\alpha=\pm\sqrt{\Delta_E/2}$ if $\Delta_E\equiv 0 \mod 4$, and $\alpha=(1\pm\sqrt{\Delta_E})/2$ if $\Delta_E\equiv 1 \mod 4$.
Hence, we have $\iota(\fo_E)=\iota(E)\cap \cO$, and  the element $a=b-\sigma(b)=\iota(\alpha-\overline{\alpha})$ satisfies that $a\in \iota(E)\cap V_D$ and $\Nm(a)=-\Delta_E$. 
Since $\Tr(a)=0$, we have $a\in \iota(E)\cap L(\cO)$. 
Since $\iota(E)\cap L(\cO)=\iota(\fo_E)\cap L(\cO)$, it is sufficient to prove
\[
\iota(\fo_E)\cap L(\cO)= A\qquad \text{where} \;\;  A=\{ u\in\Z+2\iota(\fo_E) \mid \Tr(u)=0\}.  
\]
The inclusion $A\subset \iota(\fo_E)\cap L(\cO)$ is obvious, hence we need to show $\iota(\fo_E)\cap L(\cO) \subset A$. 
We take $x+yb\in \iota(\fo_E)\cap L(\cO)$ $(x,y\in\Z)$. 
\begin{enumerate}
\renewcommand{\labelenumi}{(\roman{enumi})}
\item First, we assume $\Delta_E\equiv 0 \mod 4$. In this case, we have $\Tr(b)=0$, which implies $x=0$, and hence $yb\in L(\cO)$. This shows $y\in 2\Z$ and therefore $x+yb\in A$.
\item Next, we assume $\Delta_E\equiv 1 \mod 4$. Since $\Tr(x+yb)=0$, we have $y\in 2\Z$ and $2x+y=0$. Hence, $x+yb=y\iota(\sqrt{\Delta_E}/2)$, and therefore we have $x+yb\in A$.
\end{enumerate}
The assertion follows from (i) and (ii).
\end{proof}

The action of the group $G$ on $\Emb(E,D)$ is given by
\[
(g\cdot \iota)(x)=g\iota(x)g^{-1} \qquad (g\in G, \;\; \iota\in \Emb(E,D), \;\; x\in E).
\]
For $\iota_1, \iota_2 \in \Emb(\fo_E,\cO)$, if there is an element $\gamma \in \Gamma$ such that $\gamma\cdot \iota_1=\iota_2$, we denote $\iota_1\sim\iota_2$, which defines an equivalence relation on $\Emb(\fo_E,\cO)$. 
Write $\Emb(\fo_E,\cO)/\sim$ for the set of equivalence classes in $\Emb(\fo_E,\cO)$. 


Let us recall the action of the ideal class group of $\fo_E$ on the set $\Emb(\fo_E,\cO)/\sim$. 
See \cite{Voight}*{Corollary 30.4.23} for the detail. 
As explained in \cite{SWY2}*{\S3.2, \S3.3}, it is closely related to toric periods.
Take an element $E\in X(D)$, define a torus $T$ as above, and fix an optimal embedding $\iota_0\in\Emb(\fo_E,\cO)$.
Subsets $B$, $B_p$, $B_\A$ of $G$, $G_p$, $G_\A$ are respectively defined as
\[
B=\{ g\in G \mid g^{-1}\cdot\iota_0\in\Emb(\fo_E,\cO)\},
\]
\[
B_p=\{ g\in G_p \mid g^{-1}\cdot (\iota_0\otimes\mathrm{id}_{\Q_p}) \in\Emb(\fo_{E,p},\cO_p)\}, \quad B_\A=G_\inf \prod_p B_p.
\]
Note that the torus $T$ (resp. $T_\A$) is contained in $B$ (resp. $B_\A$).
\begin{lem}\label{lem:B}
A mapping $B\ni g \mapsto g^{-1}\cdot \iota_0 \in \Emb(\fo_E,\cO)$ gives a bijection $T \bsl B / \Gamma \; \cong \; \Emb(\fo_E,\cO)/\sim$. 
Furthermore, the diagonal embedding $B\to B_\A$ gives a bijection $T \bsl B / \Gamma \; \cong \; T \bsl B_\A / K$. 
\end{lem}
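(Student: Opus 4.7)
The plan is to handle the two bijections in sequence, each by a direct double-coset manipulation. For the first, the Skolem--Noether theorem and the fact that a quadratic subfield of $D$ is its own centralizer are the key ingredients; for the second, Assumption \ref{ass2} ($G_\A=GK$) together with $G\cap K=\Gamma$ does the heavy lifting.

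For the bijection $T\bsl B/\Gamma\cong \Emb(\fo_E,\cO)/\sim$, I first check well-definedness. The torus $T$ is the image in $G$ of $\iota_0(E)^\times$, which centralizes $\iota_0(E)$ in $D^\times$ because any quadratic subfield is its own centralizer in the quaternion algebra $D$; hence $t\cdot \iota_0=\iota_0$ for $t\in T$, and then a direct computation shows $(tg\gamma)^{-1}\cdot\iota_0$ is $\Gamma$-equivalent to $g^{-1}\cdot\iota_0$. Surjectivity is Skolem--Noether: any $\iota\in\Emb(\fo_E,\cO)$ has the form $g^{-1}\cdot\iota_0$ for some $g\in G$, and this $g$ lies in $B$ precisely because $\iota$ is optimal. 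Injectivity: if $g_1^{-1}\cdot\iota_0\sim g_2^{-1}\cdot\iota_0$ via some $\gamma\in\Gamma$, then $g_2\gamma g_1^{-1}$ centralizes $\iota_0(E)$ in $G$, hence belongs to $T$, so $g_1$ and $g_2$ represent the same class in $T\bsl B/\Gamma$.

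For the second bijection $T\bsl B/\Gamma\cong T\bsl B_\A/K$, the inclusion $B\hookrightarrow B_\A$ descends to a map on double cosets since $\Gamma\subset K$ and the acting torus is the rational $T$ on both sides. Surjectivity uses Assumption \ref{ass2}: given $g\in B_\A$, write $g=\gamma k$ with $\gamma\in G$, $k\in K$, so that $TgK=T\gamma K$, and verify $\gamma\in B$. Locally at each finite prime $p$,
\[
\gamma_p^{-1}\iota_0(\fo_{E,p})\gamma_p \;=\; k_p\bigl(g_p^{-1}\iota_0(\fo_{E,p})g_p\bigr)k_p^{-1} \;\subset\; \cO_p
\]
optimally, because $g\in B_\A$ and $k_p\in K_p$ stabilizes $\cO_p$ by conjugation; combining these local optimal embeddings into a global one in $\Emb(\fo_E,\cO)$ uses the local--global characterization $\cO=D\cap \prod_p\cO_p$ and maximality of $\iota_0(\fo_E)$ in $\iota_0(E)$. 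Injectivity: if $g_2=tg_1 k$ with $t\in T\subset G$ and $k\in K$, then $k=g_1^{-1}t^{-1}g_2\in G\cap K=\Gamma$, so $g_2\in Tg_1\Gamma$.

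The only step that needs more than formal bookkeeping is the global-to-local transition in the surjectivity of the second bijection, where one passes from optimal embeddings into each $\cO_p$ to a single optimal embedding into $\cO$; this rests on the maximality of $\iota_0(\fo_E)$ as an order in $\iota_0(E)$, which forces $\gamma^{-1}\iota_0(E)\cap\cO=\gamma^{-1}\iota_0(\fo_E)$. Everything else follows cleanly from Skolem--Noether, $G_\A=GK$, and $G\cap K=\Gamma$.
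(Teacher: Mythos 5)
Your proof is correct and follows essentially the same route as the paper: Skolem--Noether plus the fact that the stabilizer of $\iota_0$ is $T$ for the first bijection, and Assumption \ref{ass2} with $G\cap K=\Gamma$ for the second, which the paper compresses into the single identity $B_\A/K=(B_\A\cap GK)/K\cong(B_\A\cap G)/(G\cap K)=B/\Gamma$. The local--global step you flag (that $B_\A\cap G=B$, i.e.\ that locally optimal everywhere implies globally optimal) is left implicit in the paper, so making it explicit is a reasonable addition rather than a divergence.
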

\begin{proof}
By the Skolem-Noether theorem, every optimal embedding $\iota$ in $\Emb(\fo_E,\cO)$ is obtained by $g^{-1}\cdot\iota_0$ for some $g\in G$.  
Hence, the mapping $B\ni g \mapsto g^{-1}\cdot \iota_0 \in \Emb(\fo_E,\cO)$ is surjective. 
Since $g\in T$ if and only if $g^{-1}\cdot \iota_0=\iota_0$ $(g\in G)$, the first assertion is proved. 
By Assumption \ref{ass2}, one has
\[
B_\A/K= (B_\A\cap GK)/K\cong (B_\A\cap G)/(G\cap K)=B/\Gamma.
\]
This shows the second assertion. 
\end{proof}

We put $U_f= \prod_{p<\inf} \overline{\iota_0(\fo_{E,p}^\times)}$, where $\fo_{E,p}=\fo_E\otimes \Z_p$, and set $U=T_\R U_f$. 
Note that $U_f= T_\A\cap K_f$. 
The ideal class group of $\fo_E$ is identified with the group $T\bsl T_{\A_f} /U_f=T\bsl T_\A / U$, and so we denote $\Cl(\fo_E)= T\bsl T_{\A} /U$. 
Hence there exist elements $b_j\in T_{\A_f}$ ($j=1,\ldots , h_K$) such that
\[
T_\A=\bigsqcup_{j=1}^{h_E} T \, b_j \, U ,
\]
where $h_E$ is the class number of $E$, i.e. $h_E=\#\Cl(\fo_E)$.
The group $\Cl(\fo_E)$ acts on $T \bsl B_\A / K$ as $t \cdot T b K = T(tb)K$ $(t\in \Cl(\fo_E)$, $TbK\in T \bsl B_\A / K)$. 
It is known that this action is well-defined and fixed point free (cf. \cite{Voight}*{Proof of Theorem 30.4.7}).
Since $T_\A$ is contained in $B_\A$, it follows from Lemma \ref{lem:B} that, for some $\gamma_j \in B$ and $k_j\in K_f$, we have
\begin{equation}\label{eq:clelem}
b_j = \gamma_j\, (\gamma_j)_\inf^{-1} \, k_j
\end{equation}
where $(\gamma_j)_\inf$ means the $\inf$-component of $\gamma_j\in G\subset G_\A$. 
It follows from Lemma \ref{lem:B} and \eqref{eq:clelem} that $Tb_jK$ corresponds to the optimal embedding $\gamma_j^{-1}\cdot \iota_0$.

Suppose $\Emb(\fo_E,\cO)\neq\emptyset$ and fix $\iota_0\in \Emb(\fo_E,\cO)$. 
By Lemma \ref{lem:A}, we can take an element $a_0\in \iota_0(E)\cap V_D$ such that $\Nm(a_0)=-\Delta_E$ and $\Z a_0= \iota_0(E)\cap L(\cO)$. 
Denote $\iota_j= \gamma_j^{-1}\cdot \iota_0 \in \Emb(\fo_E,\cO)$ and $a_j= a_0\cdot \gamma_j$. 
Obviously we have $\Nm(a_j)=-\Delta_E$ and $\Z a_j = \iota_j(E)\cap L(\cO)$.
The finite set
\begin{equation}\label{eq:orbit}
   \Orb(\iota_0)= \{ (\iota_j,a_j) \mid 1\le j\le h_E \} \quad (\subset L(\cO))    
\end{equation}
is regarded as the orbit of $\iota_0$ by $\Cl(\fo_E)$. 
The following lemma shows that the toric period is given by a finite sum over $\Orb(\iota_0)$. 
\begin{lem}\label{lem:periodformula}
There exists a constant $c>0$ such that for any $\alg\in\cH_l^\Gamma$ we have
\[
\cP_{\iota_0,E}(\cL_{a_0}(\alg))=c \, \sum_{j=1}^{h_E} \alg\circ\cT(a_j).
\]
\end{lem}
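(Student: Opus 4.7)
The plan is to exploit the double coset decomposition $T_\A = \bigsqcup_{j=1}^{h_E} T b_j U$ from \S\ref{sec:periodformula} to break the toric period into a finite sum, then show that on each piece the integrand is the constant $\alg\circ\cT(a_j)$. The main work is the latter — identifying the value of $\cL_{a_0}(\alg)$ at $b_j U$ with $\alg\circ\cT(a_j)$ — and once this is done the volume factor supplies the constant $c$.

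First I would rewrite
\[
\cP_{\iota_0,E}(\cL_{a_0}(\alg))=\sum_{j=1}^{h_E}\int_{T\bsl T b_j U}\cL_{a_0}(\alg)(t)\,\d t .
\]
Since $\cL_{a_0}(\alg)\in L^2(G\bsl G_\A/K_f)$ and $T\subset G$, the function is left $T$-invariant, and $T_\A$ is abelian, so $T\cap b_jUb_j^{-1}=T\cap U$. Thus each summand becomes $\int_{(T\cap U)\bsl U}\cL_{a_0}(\alg)(b_j u)\,\d u$, and setting $c=\vol((T\cap U)\bsl U)>0$ it suffices to show $\cL_{a_0}(\alg)(b_j u)=\alg\circ\cT(a_j)$ for every $u\in U$.

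For this, I would combine three facts. Writing $u=u_\inf u_f$ with $u_\inf\in T_\R$ and $u_f\in U_f\subset K_f$, and using the decomposition $b_j=\gamma_j(\gamma_j)_\inf^{-1}k_j$ from \eqref{eq:clelem} together with the commutativity of $T_\A$, the product can be arranged as
\[
b_j u \;=\; \gamma_j\cdot\bigl((\gamma_j)_\inf^{-1} u_\inf\bigr)\cdot (k_j u_f),
\]
which is a decomposition in $G\cdot G_\inf\cdot K_f$. By the definition \eqref{eqn:lift} of $\cL_{a_0}$,
\[
\cL_{a_0}(\alg)(b_j u)=\alg\bigl(\cT(a_0)\,\cF(u_\inf^{-1})\,\cF((\gamma_j)_\inf)\bigr).
\]
Now $u_\inf$ lies in $\iota_0(E\otimes\R)^\times/\R^\times$, so it commutes with $a_0\in\iota_0(E)$; hence $a_0\cdot u_\inf^{-1}=a_0$, i.e.\ $\cT(a_0)\cF(u_\inf^{-1})=\cT(a_0)$. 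Finally, since $\cF$ is defined over $\Q$ and $\gamma_j\in G$, the matrix $\cF((\gamma_j)_\inf)$ equals $\cF(\gamma_j)$, and $a_j=a_0\cdot\gamma_j$ gives $\cT(a_0)\cF(\gamma_j)=\cT(a_j)$. Therefore $\cL_{a_0}(\alg)(b_j u)=\alg\circ\cT(a_j)$, which is constant in $u$.

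The harmonic polynomial $\alg$ plays no role in this argument; the only subtle point is verifying that the $U$-action leaves the integrand invariant. The finite-component invariance is automatic from $U_f\subset K_f$, and the archimedean invariance is precisely the statement that $T_\R$ stabilizes $a_0$ in $V_D$, which follows from $a_0\in\iota_0(E)$ and the commutativity of $\iota_0(E)$. Substituting the constant value into the sum yields the claimed formula with $c=\vol((T\cap U)\bsl U)>0$.
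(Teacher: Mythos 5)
Your proof is correct and follows essentially the same route as the paper: decompose $T\bsl T_\A$ into the $h_E$ classes coming from $\Cl(\fo_E)$, use $b_j=\gamma_j(\gamma_j)_\inf^{-1}k_j$ together with the left $G$- and right $K_f$-invariance of $\cL_{a_0}(\alg)$, and observe that $T_\R$ fixes $a_0$ so that the integrand is the constant $\alg\circ\cT(a_j)$; your constant $c=\vol((T\cap U)\bsl U)$ agrees with the paper's $\vol(U_f)\vol(T_\R)/\#(T\cap U_f)$. The only cosmetic difference is that the paper first quotients by $U_f$ and then integrates over $T_\R$, whereas you integrate over $(T\cap U)\bsl U$ in one step.
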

\begin{proof}
Since $\cL_{a_0}(\alg)(gk_f)=\cL_{a_0}(\alg)(g)$ (for all $g\in G_\A$, $k_f\in K_f$), we find
\[
\cP_{\iota_0,E}(\cL_{a_0}(\alg))=\vol(U_f) \int_{T\bsl T_\A/ U_f} \cL_{a_0}(\alg)(t) \, \d t.
\]
Set $W=T\cap U_f$, and then we have $T\bsl T_\A / U_f = \bigsqcup_{j=1}^{h_E} b_j\cdot (W\bsl T_\R)$. 
Hence, one obtains the formula
\[
\cP_{\iota_0,E}(\cL_{a_0}(\alg))=\frac{\vol(U_f)}{\#(W)} \sum_{j=1}^{h_E}\int_{T_\R} \cL_{a_0}(\alg)(b_j\, t_\inf) \, \d t_\inf. 
\]
By \eqref{eq:clelem} $b_j=\gamma_j(\gamma_j)_\inf^{-1}k_j$, the period integral $\cP_{\iota_0,E}(\cL_{a_0}(\alg))$ equals
\[
\frac{\vol(U_f)}{\#(W)}  \sum_{j=1}^{h_E} \int_{T_\R}\cL_{a_0}(\alg)(  (\gamma_j)_\inf^{-1}\, t_\inf ) \, \d t_\inf = \frac{\vol(U_f)}{\#(W)}  \sum_{j=1}^{h_E} \int_{T_\R} \alg( \cT(t_\inf a_0 t_\inf^{-1})\cdot \cF(\gamma_j)) \, \d t_\inf . 
\]
Since $a_0\in \iota_0(E)$ and $T$ is commutative, we have $t_\inf a_0 t_\inf^{-1}=a_0$ and therefore
\[
\cP_{\iota_0,E}(\cL_{a_0}(\alg))=\frac{\vol(U_f)\, \vol(T_\R)}{\#(W)}  \sum_{j=1}^{h_E} \alg( \cT(a_0)\cdot \cF(\gamma_j)) .
\] 
Then the assertion follows from the fact $a_j=\gamma_j^{-1}a_0\gamma_j$.
\end{proof}

\section{Jacquet--Langlands correspondence and root numbers}\label{sec:JL}

Throughout this section, we suppose $\disc(\cO)$ is square-free. 
Let $\fO$ be an Eichler order in $D$ containing $\cO$.  
Define an inner product on $\cH_l^{\overline{\fO^\times}}\otimes\C$ by the $L^2$-inner product using the identification between $\cH_l^{\overline{\fO^\times}}\otimes\C$ and $\cL_a(\cH_l^{\overline{\fO^\times}}\otimes\C)(\subset L^2(G\bsl G_\A/K_f))$. 
Hence, we can define $(\cH_l^\Gamma\otimes\C)^\new$ by the orthogonal compliment of $\sum_{\fO\supsetneq \cO}\cH_l^{\overline{\fO^\times}}\otimes\C$, where $\fO$ runs over all Eichler orders containing $\cO$.

\subsection{Jacquet--Langlands correspondence}

We review the Jacquet--Langlands correspondence between $\PGL_2$ and $G$.
For details, we refer to \cite{Shimizu}.

Let $S_k^\new(\Gamma_0(N))$ denote the space of holomorphic cusp newforms of weight $k$ for $\Gamma_0(N)$, and $T'_p$ denote the Hecke operator on $S_k^\new(\Gamma_0(N))$ for each prime $p\nmid N$.  
\begin{thm}[Eichler, Shimizu, Jacquet--Langlands]\label{thm:jl}
Let $N=\disc(\cO)$ be a square-free integer. 
There exists an isomorphism $\mathrm{JL}$ from $(\cH_l^\Gamma\otimes\C)^\new$ to $S_{2l+2}^\new(\Gamma_0(N))$ such that
\[
\mathrm{JL}(T_p\, \alg)=T'_p\, \mathrm{JL}(\alg) \quad \text{for any $\alg\in (\cH_l^\Gamma\otimes\C)^\new$ and any prime $p\nmid N$}.
\]
\end{thm}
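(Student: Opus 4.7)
The plan is to construct $\mathrm{JL}$ via the global Jacquet-Langlands correspondence, matching automorphic representations of $G_\A$ with those of $\PGL_2(\A)$ place by place. By Lemma \ref{lem:auto} and strong multiplicity one, every Hecke eigenform $\alg\in (\cH_l^\Gamma\otimes\C)^\new$ determines a unique irreducible automorphic representation $\pi=\otimes_v\pi_v$ of $G_\A$ (not one-dimensional), with archimedean component $\pi_\infty\cong\rho_l$ and a nonzero $K$-fixed vector; I would send $\alg$ to the (up to scalar unique) newform $f\in S_{2l+2}^\new(\Gamma_0(M))$ generating the global Jacquet-Langlands transfer $\pi'=\otimes_v\pi'_v$ of $\pi$, where $M$ is the conductor of $\pi'$.

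To verify the level equality $M=N$ and the Hecke intertwining, I would analyze each place separately. At $v$ where $D_v$ splits, the local JL transfer is the identity on admissible representations; in particular, for every prime $p\nmid N$ the Satake parameters agree, giving $\mathrm{JL}(T_p\alg)=T'_p\mathrm{JL}(\alg)$ and unramified $\pi'_p$. At $p\mid \disc(D)$, $G_p$ is compact modulo center, so $\pi_p$ is finite-dimensional; the local JL correspondence sends the representations with $K_p$-fixed vectors to the Steinberg representation of $\PGL_2(\Q_p)$, which has conductor $p$. At $p\mid \level(\cO)$, $K_p$ is the image of an Iwahori subgroup, and the newness condition (orthogonal complement of lifts from Eichler orders $\fO\supsetneq \cO$) removes the unramified line from the Iwahori-fixed vectors, forcing $\pi'_p$ to be Steinberg of conductor $p$. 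Multiplying these local contributions gives $M=\disc(D)\level(\cO)=N$. At the archimedean place, $\rho_l$ transfers to the holomorphic discrete series of weight $2l+2$, producing a classical holomorphic cusp form of that weight.

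For bijectivity, injectivity of $\mathrm{JL}$ follows from the injectivity of $\cL_a$ (for any $a\in V_D$ with $\cT(a)\neq\mathbf{0}$) combined with strong multiplicity one on $\PGL_2(\A)$. Surjectivity is obtained by reversing the argument: given $f\in S_{2l+2}^\new(\Gamma_0(N))$, the cuspidal representation $\pi'_f$ is discrete series at $\infty$ and Steinberg at each $p\mid N$ (by square-freeness of $N$), so it is discrete series at precisely the ramification set of $D$; this is exactly the sign condition for the existence of a reverse global JL transfer to an irreducible automorphic representation of $G_\A$ with a nonzero $K$-fixed vector in the $\rho_l$-isotypic component, which then provides a preimage of $f$. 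The main obstacle is the detailed local analysis at primes dividing $N$: one must verify that square-freeness of $N$ forces the local components on both sides to be Steinberg, match the dimensions of $K_p$-fixed vectors with the newform line on the $\PGL_2$ side, and check that the two notions of \emph{new} correspond via Atkin-Lehner theory on the $\PGL_2$ side and Eichler-order oldform lifts on the quaternionic side; these classical calculations are carried out in the reference cited in the statement.
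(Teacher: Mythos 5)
The paper does not prove this theorem: it is quoted as a classical result of Eichler, Shimizu and Jacquet--Langlands, with the reference \cite{Shimizu} standing in for the argument. Your outline is a correct account of how that classical proof goes, and it rightly identifies the local analysis at primes dividing $N$ and the matching of the two notions of ``new'' as the technical heart, so it is consistent with the paper's treatment.

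Two small imprecisions, neither of which breaks the argument. First, at a prime $p\mid\disc(D)$ the local components of $\pi_p$ admitting $K_p$-fixed vectors are the two unramified characters $\trep_p$ and $\omega_p\trep_p$, transferring to $\St_p$ \emph{or} $\omega_p\St_p$ (and similarly at $p\mid\level(\cO)$ the non-spherical Iwahori-fixed constituents are $\St_p$ or $\omega_p\St_p$); since both have conductor $p$, your computation $M=N$ is unaffected, but the paper's later root-number analysis in \S\ref{sec:JL} depends on distinguishing these twists, so the phrasing ``the Steinberg representation'' is too restrictive. Second, in the surjectivity step $\pi'_f$ is square-integrable at $\infty$ and at \emph{every} $p\mid N$, hence at a superset of the ramification set of $D$; the existence of the reverse transfer only requires square-integrability \emph{at least} on the ramification set, so ``precisely'' should be ``at least''. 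Finally, note that the statement implicitly excludes the case where the constant function lies in $(\cH_0^\Gamma)^{\new}$ (e.g.\ $l=0$ and $\cO$ maximal), where the Eisenstein line has no cuspidal counterpart; the paper only invokes the theorem for $l\ge 1$, so this does not matter here, but your construction via Lemma \ref{lem:auto} genuinely requires $\alg$ non-constant.
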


Take a Hecke eigenform $\alg\in (\cH_l^\Gamma\otimes\C)^\new$ such that $\alg$ is not constant.
By Lemma \ref{lem:auto}, we have an irreducible automorphic representation $\pi=\otimes_v \pi_v$ of $G_\A$ such that the automorphic form $\cL_a(\alg)$ belongs to $\pi$ for any $a\in\Q^3$, 
we have $\prod_p p^{c(\pi_p)}=\disc(\cO)$ (where $c(\pi_p)$ is the conductor of $\pi_p$), and $\pi$ is not $1$-dimensional.
Under these conditions, $\pi$ satisfies the following conditions:
\begin{itemize}
\item $\pi_\inf$ is isomorphic to $\rho_l$. 
\item When $p\mid \disc(D)$, $\pi_p$ is isomorphic to the trivial representation $\trep_p$ or its twist $\omega_p\trep_p=\trep_p\otimes \omega_p$, where $\omega_p$ is the non-trivial unramified quadratic character on $\Q_p^\times$. 
\item When $p\mid \level(\cO)$, $\pi_p$ is isomorphic to the Steinberg representation $\St_p$ or its twist $\omega_p\St_p=\St_p\otimes \omega_p$. 
\item When $p\nmid \disc(\cO)$, $\pi_p$ is an unramified representation. 
\end{itemize}

The holomorphic cusp form $\mathrm{JL}(\alg)$ is also a Hecke eigenform by Theorem \ref{thm:jl}.
Let $\pi'=\otimes_v \pi'_v$ denote the irreducible automorphic representation of $\PGL_2(\A)$ which is generated by $\mathrm{JL}(\alg)$. 
Then, $\pi'$ satisfies the following:
\begin{itemize}
\item $\pi_\inf'$ is the discrete series with the minimal $\SO_2$-type is $\left[\left(\begin{smallmatrix} \cos\theta & \sin\theta \\ -\sin\theta & \cos\theta\end{smallmatrix}\right)\mapsto e^{\pm 2(l+1)i\theta}\right]$. 
\item Let $p\mid \disc(D)$. We have $\pi_p'=\St_p$ if $\pi_p=\trep_p$, and $\pi_p'=\omega_p\St_p$ if $\pi_p=\omega_p\trep_p$. 
\item For each $p\nmid \disc(D)$, we have $\pi_p'=\pi_p$. 
\end{itemize}

\subsection{Root numbers}

Take a Hecke eigenform $\alg\in (\cH_l^\Gamma\otimes\C)^\new$ such that $\alg$ is not constant, and let $\pi=\otimes_v \pi_v$ (resp. $\pi'=\otimes_v \pi_v'$) be the automorphic representation of $G_\A$ (resp. $\PGL_2(\A)$) as explained in the previous subsection. 
Let $S(\pi)$ be the finite set of places consisting of $\inf$ and primes dividing $\disc(\cO)$.  
For a quadratic field $E$, we write $\pi'_E=\otimes_v \pi'_{E,v}$ for the base change lift of $\pi'$ to $\GL_2(\A_E)$, where $\A_E$ denotes the ad\`ele ring of $E$, and write $\eps(\frac12,\pi')$ (resp. $\eps(\frac12,\pi_v')$) for the root number of $\pi'$ (resp. $\pi_v'$). 
For $\cE_v\in X(D_v)$, we write $\pi'_{\cE_v}$ for the local base change lift of $\pi_v'$ to $\GL_2(\cE_v)$.

For each element $(\cE_v)_{v\in S(\pi)}\in \prod_{v\in S(\pi)}X(D_v)$, we consider the following condition: 
\begin{cond}\label{cond:1014}
For all $ v\in S(\pi)$, $D_v$ is division if and only if $ \varepsilon(\tfrac12,\pi'_{\cE_v})=-1$. 
\end{cond}
We say that $\pi_v$ is $\cE_v^\times$-distinguished if $\Hom_{\cE_v^\times}(\pi_v,\C)\neq 0$.
It is known that, for $(\cE_v)_{v\in S(\pi)}\in \prod_{v\in S(\pi)}X(D_v)$, Condition \ref{cond:1014} holds if and only if $\pi_v$ is $\cE_v^\times$-distinguished for any $v\in S(\pi)$ (see \cite{Tunnell} and \cite{Saito}). 
Hence, if $\pi$ is $E^\times$-distinguished, then Condition \ref{cond:1014} holds for $(E_v)_{v\in S(\pi)}$.

From now on, we suppose $L(\frac12,\pi')\neq 0$, which implies $\varepsilon(\frac12,\pi')=1$. 
For each $E\in X(D)$, we denote the id\`ele character corresponding to $E$ by $\eta_E=\otimes_v\eta_{E,v}$ or $\eta=\otimes_v\eta_v$. 
It is known that
\[
\varepsilon(\tfrac12,\pi_{\cE_v}')=\varepsilon(\tfrac12,\pi_v')\, \varepsilon(\tfrac12,\pi_v'\otimes \eta_{\cE_v})\, \eta_{\cE_v}(-1)
\]
and
\begin{equation}\label{eq:eps1}
\varepsilon(\tfrac12,\pi'_E)=\varepsilon(\tfrac12,\pi')\, \varepsilon(\tfrac12,\pi'\otimes \eta)\, \eta(-1)=\varepsilon(\tfrac12,\pi'\otimes \eta).
\end{equation}
We write $\eta_{\cE_v}$ for the quadratic character on $\Q_v^\times$ corresponding to $\cE_v\in X(D_v)$.

Since $\pi_\inf'\otimes\sgn\cong\pi_\inf'$, $\cE_\inf\in X(D_\inf)=\{\C\}$, and $\varepsilon(\frac12,\pi_\inf')=-(-1)^{l/2}$, we have
\begin{equation}\label{eq:eps2}
\varepsilon(\tfrac12, \pi'_{\cE_\inf})=\varepsilon(\tfrac12,\pi_\inf')^2\, \eta_{\cE_\inf}(-1)=-1.
\end{equation}
For explicit calculations of $\eps$-factors over finite places, we refer to \cite{Schmidt}. 
Let $v$ be a finite place of $\Q$.
When $\pi_v'$ is unramified, we have $\varepsilon(\frac12,\pi_v'\otimes\eta_{\cE_v})=\eta_{\cE_v}(-1)$, and hence
\[
\varepsilon(\tfrac12, \pi'_{\cE_v})=\eta_{\cE_v}(-1)^2=1. 
\]
Let $p$ be a prime number such that $\Q_v=\Q_p$, and $\chi_p$ a quadratic character on $\Q_p^\times$. 
It is known that $\varepsilon(\tfrac12, \chi_p\St_p)=-\chi_p(p)$ if $\chi_p$ is unramified, and $\varepsilon(\tfrac12, \chi_p\St_p)=\chi_p(-1)$ if $\chi_p$ is ramified. 
Hence, for each $p\mid \disc(\cO)$, we have
\begin{equation}\label{eq:eps3}
\varepsilon(\tfrac12, \pi'_{\cE_p})=
\begin{cases}
 1 & \text{if $\pi_p'=\St_p$ and $\eta_{\cE_p}=\trep_p$}, \\ 
 -1 & \text{if $\pi_p'=\St_p$ and $\eta_{\cE_p}\neq \trep_p$}, \\ 
 -1 & \text{if $\pi_p'=\omega_p\St_p$ and $\eta_{\cE_p}= \omega_p$}, \\ 
 1 & \text{if $\pi_p'=\omega_p\St_p$ and $\eta_{\cE_p}\neq \omega_p$}.
\end{cases} 
\end{equation}
Consider the following condition for $(\cE_v)_{v\in S}\in \prod_{v\in S(\pi)}X(D_v)$:
\begin{cond}\label{cond:3}$ $
\begin{itemize}
\item When $p\mid \disc(D)$ and $\pi_p=\trep_p$, $\cE_p$ is not split over $\Q_p$. 
\item When $p\mid \disc(D)$ and $\pi_p=\omega_p\trep_p$, $\cE_p$ is an unramified extension over $\Q_p$.
\item When $p\mid \level(\cO)$ and $\pi_p=\St_p$, $\cE_p$ is split over $\Q_p$.
\item When $p\mid \level(\cO)$ and $\pi_p=\omega_p\St_p$, $\cE_p$ is a ramified extension over $\Q_p$. 
\end{itemize}    
\end{cond}
Summing up the above arguments, we obtain
\begin{prop}\label{prop:ep}
Assume $L(\frac12,\pi')\neq 0$, and take an element $(\cE_v)_{v\in S(\pi)}\in \prod_{v\in S(\pi)}X(D_v)$. 
Then the element $(\cE_v)_{v\in S(\pi)}$ satisfies Condition \ref{cond:1014} if and only if $(\cE_v)_{v\in S(\pi)}$ satisfies Condition \ref{cond:3}.
Moreover, if Condition \ref{cond:3} holds, then we have $\varepsilon(\tfrac12,\pi'\otimes\eta)=1$ for any $E\in X(D)$ such that $E_v\cong\cE_v$ for all $v\in S(\pi)$. 
\end{prop}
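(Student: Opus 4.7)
The plan is to split the proposition into two independent assertions and handle both by unwinding the $\varepsilon$-factor formulas already assembled in the preceding subsection.

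\textbf{Equivalence of Conditions \ref{cond:1014} and \ref{cond:3}.} I would verify the equivalence place-by-place for $v\in S(\pi)=\{\infty\}\cup\{p\mid\disc(\cO)\}$. At $v=\infty$, equation \eqref{eq:eps2} gives $\varepsilon(\tfrac12,\pi'_{\cE_\infty})=-1$, which matches the fact that $D_\infty$ is division (Assumption \ref{ass1}), so Condition \ref{cond:1014} holds automatically and there is nothing to impose at $\infty$ in Condition \ref{cond:3}. At a finite prime $p\in S(\pi)$, $D_p$ is division precisely when $p\mid\disc(D)$, and Condition \ref{cond:1014} prescribes the required sign of $\varepsilon(\tfrac12,\pi'_{\cE_p})$. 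Table \eqref{eq:eps3} reads off this sign in terms of the pair $(\pi'_p,\eta_{\cE_p})$, where $\pi'_p$ is either $\St_p$ or $\omega_p\St_p$ by the Jacquet--Langlands dictionary, and $\eta_{\cE_p}$ is trivial, equal to the unramified quadratic character $\omega_p$, or ramified according to whether $\cE_p$ is split, unramified quadratic, or ramified quadratic. Running through the four configurations of $(\pi_p,D_p)$ listed in Condition \ref{cond:3} and matching each with the corresponding entry of \eqref{eq:eps3} yields the equivalence in both directions.

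\textbf{The ``moreover'' assertion.} The plan is to use \eqref{eq:eps1}: under the hypothesis $L(\tfrac12,\pi')\neq 0$ (so that $\varepsilon(\tfrac12,\pi')=1$) this gives
\[
\varepsilon(\tfrac12,\pi'\otimes\eta) \;=\; \varepsilon(\tfrac12,\pi'_E) \;=\; \prod_v \varepsilon(\tfrac12,\pi'_{E_v}).
\]
At each $v\notin S(\pi)$, $\pi'_v$ is unramified and the computation recorded in the text gives $\varepsilon(\tfrac12,\pi'_{E_v})=1$. At each $v\in S(\pi)$, Condition \ref{cond:1014} (equivalent to Condition \ref{cond:3} by the previous step) forces $\varepsilon(\tfrac12,\pi'_{E_v})=-1$ exactly at those places where $D_v$ is division. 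Hence the global product collapses to $(-1)^N$, where $N$ is the cardinality of the ramification set of $D$. Since $D$ is a quaternion algebra over $\Q$, the reciprocity law for the Brauer group of $\Q$ forces $N$ to be even, and therefore $\varepsilon(\tfrac12,\pi'\otimes\eta)=+1$.

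The main obstacle is entirely clerical: enumerating the four $(\pi_p,D_p)$-configurations at finite primes of $S(\pi)$, inverting \eqref{eq:eps3} to recover the admissible $\eta_{\cE_p}$, and verifying case-by-case that the result matches the bullet list in Condition \ref{cond:3}. No new ingredient beyond the already-tabulated local $\varepsilon$-factor formulas for Steinberg (and twisted Steinberg) representations, together with the evenness of the ramification set of a quaternion algebra over $\Q$, is required.
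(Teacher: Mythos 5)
Your proposal is correct and follows the paper's own route: the printed proof is literally the assembly of \eqref{eq:eps1}, \eqref{eq:eps2}, \eqref{eq:eps3} and the unramified computation (``summing up the above arguments''), which is exactly the place-by-place check at $v\in S(\pi)$ plus the global product argument using $\varepsilon(\tfrac12,\pi')=1$, $\eta(-1)=1$ and the evenness of the ramification set of $D$ that you describe. One caveat you will hit when you actually run the four cases: at $p\mid\level(\cO)$ with $\pi_p=\omega_p\St_p$, table \eqref{eq:eps3} gives $\varepsilon(\tfrac12,\pi'_{\cE_p})=+1$ whenever $\eta_{\cE_p}\neq\omega_p$, i.e.\ for $\cE_p$ split \emph{or} ramified, so Condition \ref{cond:1014} is strictly weaker there than the literal fourth bullet of Condition \ref{cond:3}; this is an imprecision in the statement of Condition \ref{cond:3} itself (the paper's own application in \S\ref{sec:6} treats ``split or ramified'' as admissible there), not a defect of your method, but your claim that the clerical check ``yields the equivalence in both directions'' should be qualified accordingly.
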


Let $E\in X(D)$ be an imaginary quadratic field. 
Note that, if $\pi$ is $E^\times$-distinguished, then $(E_v)_{v\in S(\pi)}$ satisfies Condition \ref{cond:1014}, and therefore it also satisfies Condition \ref{cond:3}.

By Assumption \ref{ass2}, we see that
\begin{equation}\label{eq:nop}
\displaystyle
\#(\Emb(\fo_E,\cO)/\sim)=h_E\prod_{p|\disc(D)} \left(1-\left(\frac{E}{p}\right)\right) \prod_{q|\level(\cO)} \left(1+\left(\frac{E}{q}\right)\right),
\end{equation}
where $\displaystyle \left(\frac{E}{p}\right)=1$ if $E_p$ is split over $\Q_p$, $\displaystyle \left(\frac{E}{p}\right)=-1$ if $E_p$ is an unramified extension over $\Q_p$, and $\displaystyle \left(\frac{E}{p}\right)=0$ otherwise (cf. \cite{Voight}*{Example 30.7.5}).
Therefore, we have the following proposition.
\begin{prop}\label{prop:106}
For $E\in X(D)$, 
if $(E_v)_{v\in S(\pi)}$ satisfies Condition \ref{cond:3}, then $\Emb(\fo_E,\cO)$ is not empty. 
\end{prop}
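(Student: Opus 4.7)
The plan is to deduce the claim directly from the embedding-count formula \eqref{eq:nop}, which expresses $\#(\Emb(\fo_E,\cO)/\sim)$ as $h_E$ times a product of local factors indexed by primes dividing $\disc(\cO)$. Since $h_E\ge 1$ always, it suffices to verify that every local factor is strictly positive under Condition \ref{cond:3}. Concretely, the factor at $p\mid\disc(D)$ is $1-\left(\frac{E}{p}\right)\in\{0,1,2\}$, which vanishes exactly when $E_p$ is split, while the factor at $q\mid\level(\cO)$ is $1+\left(\frac{E}{q}\right)\in\{0,1,2\}$, which vanishes exactly when $E_q$ is an unramified quadratic extension of $\Q_q$.

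Thus I would go through the four bullets of Condition \ref{cond:3} in turn. For $p\mid\disc(D)$ with $\pi_p=\trep_p$, the condition directly says $E_p$ is not split over $\Q_p$, so $\left(\frac{E}{p}\right)\in\{-1,0\}$ and the factor is $1$ or $2$. For $p\mid\disc(D)$ with $\pi_p=\omega_p\trep_p$, the condition says $E_p/\Q_p$ is the unramified quadratic extension, so $\left(\frac{E}{p}\right)=-1$ and the factor equals $2$. For $q\mid\level(\cO)$ with $\pi_q=\St_q$, the condition gives $E_q$ split, so $\left(\frac{E}{q}\right)=1$ and the factor equals $2$. For $q\mid\level(\cO)$ with $\pi_q=\omega_q\St_q$, the condition gives $E_q$ a ramified quadratic extension, so $\left(\frac{E}{q}\right)=0$ and the factor equals $1$.

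Combining these, every local factor in the product \eqref{eq:nop} is at least $1$, so
\[
\#(\Emb(\fo_E,\cO)/\sim)\ge h_E\ge 1,
\]
and in particular $\Emb(\fo_E,\cO)\neq\emptyset$. There is essentially no obstacle here beyond the bookkeeping: the real content was already packaged into \eqref{eq:nop} and into the translation (via Proposition \ref{prop:ep}) between the root-number condition \ref{cond:1014} and the splitting/ramification condition \ref{cond:3}. The only thing to be a little careful about is making sure I use the correct convention for $\left(\frac{E}{p}\right)$ in the two cases where $p$ ramifies in $E$ versus where $p$ is inert, since the formula treats these asymmetrically depending on whether $p\mid \disc(D)$ or $p\mid\level(\cO)$; but Condition \ref{cond:3} has been arranged precisely so that the potentially vanishing local factor never occurs.
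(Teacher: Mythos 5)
Your argument is correct and is exactly the paper's intended proof: the proposition is stated as an immediate consequence of the embedding-count formula \eqref{eq:nop}, and your case-by-case check that Condition \ref{cond:3} forces every local factor to be $1$ or $2$ is precisely the verification the authors leave implicit. Nothing further is needed.
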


\section{Modular forms for \texorpdfstring{$\disc(\cO)=2$}{TEXT} and \texorpdfstring{$l=3$}{TEXT}}\label{sec:5}

In this section, we give a proof of Theorem \ref{thm:1} which we mentioned in the introduction.
Throughout this section, we suppose $\disc(\cO)=2$, that is, $\disc(D)=2$ and $\level(\cO) =1$. 

\subsection{$D_4$ root lattice}

A maximal order $\cO$ is given by
\[
\cO=\Z+\Z i + \Z j + \Z w , \quad i^2=j^2=-1, \;\; ij=-ji, \;\; w=\frac{1+i+j+ij}{2} .
\]
The elements in $\cO$ are called the Hurwitz integral quaternions (cf. \cite{CS}*{\S 5}), and this lattice $\cO$ is known to be the $D_4$ root lattice for the bilinear form $\langle x,y\rangle=\frac12(\Nm(x+y)-\Nm(x)-\Nm(y))$. 
The group $\Gamma$ is generated by
\[
\Gamma=\overline{\cO^\times} =\overline{\langle i,\;\; j, \;\; w  \rangle}.
\]
In addition, we have $\overline{i}^2=\overline{j}^2=\overline{w}^3=1$, and $\#\Gamma=12$.

\if0
\subsection{Dimensions of $\cH_l^\Gamma$}
By the dimension formula of holomorphic cusp forms and the Jacquet-Langlands correspondence, for $l\ge 0$ we get
\[
\dim \cH_l^\Gamma=\frac{2l+1}{12}+\frac{1}{4}(-1)^l +\frac{2}{3} \times\begin{cases}
1 & \text{if $l\equiv 0 \mod 3$,} \\
0 & \text{if $l\equiv 1 \mod 3$,} \\
-1 & \text{if $l\equiv 2 \mod 3$.} 
\end{cases}
\]
A list for numerical numbers is as follows:
\[
\begin{array}{c|ccccccccccccc}
l & 0& 1& 2 & 3 & 4 & 5 & 6 & 7 & 8 & 9 & 10& 11& 12  \\ 
\dim \cH_l^\Gamma  & 1& 0&  0 & 1& 1& 0& 2& 1& 1& 2& 2& 1& 3
\end{array}
\]
In addition,
\[
\sum_{l=0}^\inf\dim \cH_l^\Gamma \, t^l =\frac{1+t^6}{(1-t^3)(1-t^4)}.
\]
\fi

\subsection{Basis $\bb_1$, $\bb_2$, $\bb_3$}

We choose a basis $\bb_1$, $\bb_2$, $\bb_3$ of $V_D$ as
\[
\bb_1=-i+j+ij, \qquad \bb_2=i-j+ij, \qquad \bb_3=i+j-ij. 
\]
Since $L(\cO)=\Z \, (2 i) +\Z \, (2 j) + \Z\, (i+j+ij)$, this basis $\bb_1$, $\bb_2$, $\bb_3$ forms a $\Z$-basis of $L(\cO)$, that is, $L(\cO)=\sum_{j=1}^3 \Z b_j$. 
Hence, $\cF(\Gamma)$ is a subgroup of $\SO(\cQ)\cap \SL_3(\Z)$. 
Under this choice, one has
\[
\cQ=\begin{pmatrix} 3&-1&-1 \\ -1&3&-1 \\ -1&-1&3 \end{pmatrix} \quad \text{and} \quad  \cQ^{-1}=\frac{1}{4}\begin{pmatrix} 2&1&1 \\ 1&2&1 \\ 1&1&2 \end{pmatrix} . 
\]
Hence, the Laplace operator $\Delta_\cQ$ is explicitly given by
\[
2\Delta_\cQ=\frac{\partial^2}{\partial x_1^2}+\frac{\partial^2}{\partial x_2^2}+\frac{\partial^2}{\partial x_3^2}+\frac{\partial^2}{\partial x_1\, \partial x_2}+\frac{\partial^2}{\partial x_2\, \partial x_3}+\frac{\partial^2}{\partial x_3\, \partial x_1} .
\]
Furthermore, the generators 
\[
\mathfrak{i}=\cF(\bar{i})=\begin{pmatrix} -1&-1&-1 \\ 0&0&1 \\ 0&1&0 \end{pmatrix}, \quad
\mathfrak{j}=\cF(\bar{j})=\begin{pmatrix} 0&0&1 \\ -1&-1&-1 \\ 1&0&0 \end{pmatrix}, \quad
\mathfrak{w}=\cF(\bar{w})=\begin{pmatrix} 0&0&1 \\ 1&0&0 \\ 0&1&0 \end{pmatrix}.
\]
of $\cF(\Gamma)$ satisfy the relations
\[
\mathfrak{i}^2=\mathfrak{j}^2=\mathfrak{w}^3=1,\quad \mathfrak{i}\, \mathfrak{j}=\mathfrak{j} \, \mathfrak{i} ,\quad \mathfrak{i}\, \mathfrak{w}=\mathfrak{w} \, \mathfrak{j},\quad \mathfrak{w}\, \mathfrak{i}=\mathfrak{i}\, \mathfrak{j} \, \mathfrak{w}.
\]
Therefore, $\Gamma$ is isomorphic to the alternating group of degree $4$.

\subsection{Modular forms of degree $l=3$}

Suppose $l=3$. 
A harmonic polynomial $\alg \in \cW_3$ is given by
\[
\alg(x)=x_1^3+x_2^3+x_3^3-x_1^2x_2-x_1^2x_3-x_2^2x_1-x_2^2x_3-x_3^2x_1-x_3^2x_2+2x_1x_2x_3.
\]
By direct calculations, it is easy to see that $\alg$ belongs to $\cH_3^\Gamma$. 
Since $\cH_3^\Gamma$ is isomorphic to the space of holomorphic newforms of $\Gamma_0(2)$ and weight $8$ by Theorem \ref{thm:jl}, we find $\dim \cH_3^\Gamma=1$, which means that $\cH_3^\Gamma=\Q \alg$ and $\alg$ is a Hecke eigenform in $\cH_3^\Gamma$. 
Hence, it follows from Lemma \ref{lem:auto} that there exists a unique irreducible automorphic representation $\pi=\otimes_v \pi_v$ of $G_\A$ such that $\cL_a(\alg)\in \pi$ for all $a\in V_D$.

Take a quadratic field $E\in X(D)$, and assume that $\Emb(\fo_E,\cO)$ is not empty. 
Then, by Lemma \ref{lem:A}, we can choose an optimal embedding $\iota_0$ in $\Emb(\fo_E,\cO)$, and an element $a_0\in \iota_0(E)\cap V_D$ such that $\Nm(a_0)=-\Delta_E$ and $\Z a_0=\iota_0(E)\cap L(\cO)$.  
In addition, we take the orbit $\Orb(\iota_0)=\{ (\iota_j,a_j) \mid 1\le j\le h_E\}$ as in \eqref{eq:orbit}. 
Note that they satisfy $a_j\in \iota_j(E)\cap L(\cO)$ and $\Nm(a_j)=-\Delta_E$. 
\begin{lem}\label{lem:d4sum}
Suppose that $\Delta_E\equiv 1 \mod 4$. 
Then we have $\alg\circ\cT(a_j)\equiv 1 \mod 2$ for any $j$ $(1\le j\le h_E)$, and we also see that
\[
\sum_{j=1}^{h_E}\alg\circ\cT(a_j)\equiv h_E \mod 2 .
\]
\end{lem}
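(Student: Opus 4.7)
The plan is to prove the congruence $\varphi\circ \cT(a_j)\equiv 1 \pmod 2$ for each $j$ by comparing $\varphi$ with the cube of the linear form $x_1+x_2+x_3$ modulo $4$, and then to sum over $j$.

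First I would record the quadratic relation coming from $\Nm(a_j)=-\Delta_E$. Writing $(a_{j1},a_{j2},a_{j3})=\cT(a_j)$ and using the Gram matrix $\cQ$ of our chosen basis, one has
\[
3(a_{j1}^2+a_{j2}^2+a_{j3}^2)-2(a_{j1}a_{j2}+a_{j2}a_{j3}+a_{j3}a_{j1})=-\Delta_E.
\]
Rewriting $a_{j1}^2+a_{j2}^2+a_{j3}^2=(a_{j1}+a_{j2}+a_{j3})^2-2(a_{j1}a_{j2}+a_{j2}a_{j3}+a_{j3}a_{j1})$, this becomes
\[
3(a_{j1}+a_{j2}+a_{j3})^2-8(a_{j1}a_{j2}+a_{j2}a_{j3}+a_{j3}a_{j1})=-\Delta_E.
\]
Reducing modulo $4$ and using $\Delta_E\equiv 1 \pmod 4$ gives $(a_{j1}+a_{j2}+a_{j3})^2\equiv 1\pmod 4$, so $a_{j1}+a_{j2}+a_{j3}\equiv \pm 1 \pmod 4$; in particular it is odd.

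Next I would compare $\varphi$ with $(x_1+x_2+x_3)^3$. Expanding,
\[
(x_1+x_2+x_3)^3-\varphi(x)=4\bigl(x_1^2x_2+x_1^2x_3+x_2^2x_1+x_2^2x_3+x_3^2x_1+x_3^2x_2+x_1x_2x_3\bigr),
\]
so $\varphi(x)\equiv (x_1+x_2+x_3)^3\pmod 4$ for integral $x$. Combined with the preceding step,
\[
\varphi\circ\cT(a_j)\equiv (a_{j1}+a_{j2}+a_{j3})^3\equiv \pm 1 \pmod 4,
\]
which in particular gives $\varphi\circ \cT(a_j)\equiv 1\pmod 2$. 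Summing over $j=1,\dots,h_E$ yields the claimed congruence
\[
\sum_{j=1}^{h_E}\varphi\circ\cT(a_j)\equiv h_E\pmod 2.
\]

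There is no real obstacle here; the computation is elementary once one notices the parallel between the cubic invariant $\varphi$ and the elementary symmetric power sum $(x_1+x_2+x_3)^3$ modulo $4$. The only subtlety worth double-checking is that the $a_j$ really do have integral coordinates in the chosen basis, which is guaranteed by the fact that $\bb_1,\bb_2,\bb_3$ form a $\Z$-basis of $L(\cO)$ and that $a_j\in \iota_j(E)\cap L(\cO)$ by Lemma \ref{lem:A}; with that in hand, everything above is a direct modular arithmetic calculation.
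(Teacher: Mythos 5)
Your proof is correct and follows essentially the same route as the paper: derive $a_{j1}+a_{j2}+a_{j3}\equiv\pm1\pmod 4$ from the norm relation, use the identity $\varphi(x)=(x_1+x_2+x_3)^3-4(\sum_{i\neq k}x_i^2x_k+x_1x_2x_3)$ to get $\varphi\circ\cT(a_j)\equiv\pm1\pmod 4$, and sum over $j$. No discrepancies to report.
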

\begin{proof}
Set $a_j=a_{j1}\bb_1+a_{j2}\bb_2+a_{j3}\bb_3\in\cL(\cO)$ $(a_{j1}$, $a_{j2}$, $a_{j3}\in\Z)$. 
Since $\Nm(a_j)=-\Delta_E$ we have
\begin{equation*}\label{eq:d4:1}
3a_{j1}^2+3a_{j2}^2+3a_{j3}^2-2a_{j1}a_{j2}-2a_{j2}a_{j3}-2a_{j3}a_{j1}=-\Delta_E.
\end{equation*}
From this we deduce
\begin{equation}\label{eq:d4:3}
(a_{j1}+a_{j2}+a_{j3})^2\equiv \Delta_E\equiv 1 \mod 4, \quad \text{hence} \quad  a_{j1}+a_{j2}+a_{j3} \equiv \pm 1 \mod 4 .
\end{equation}
On the other hand, we see that
\begin{equation}\label{eq:d4:2}
\alg\circ \cT(a_j)=(a_{j1}+a_{j2}+a_{j3})^3  -4(a_{j1}^2a_{j2}+a_{j1}^2a_{j3}+a_{j2}^2a_{j1}+a_{j2}^2a_{j3}+a_{j3}^2a_{j1}+a_{j3}^2a_{j2}+a_{j1}a_{j2}a_{j3})
\end{equation}
by using
\[
(a_{j1}+a_{j2}+a_{j3})^3=a_{j1}^3+a_{j2}^3+a_{j3}^3+3a_{j1}^2a_{j2}+3a_{j1}^2a_{j3}+3a_{j2}^2a_{j1}+3a_{j2}^2a_{j3}+3a_{j3}^2a_{j1}+3a_{j3}^2a_{j2}+6a_{j1}a_{j2}a_{j3}.
\]
Using \eqref{eq:d4:3} and \eqref{eq:d4:2} we have
$
\alg\circ \cT(a_j)\equiv (a_{j1}+a_{j2}+a_{j3})^3 \equiv \pm 1 \mod 4.
$
\end{proof}

As in \S\ref{sec:JL}, we write $\pi'=\otimes_v \pi'_v$ for the automorphic representation of $\PGL_2(\A)$ corresponding to $\pi$ via the Jacquet-Langlands correspondence.
The following is one of our main theorems. 
\begin{thm}\label{thm:disc2}
Let $E\in X(D)$ be an imaginary quadratic field. 
Suppose that $-\Delta_E$ is a prime number and $\Delta_E\equiv 5 \mod 8$. Then we have $L(\frac12,\pi'\otimes\eta_E)\neq 0$. 
\end{thm}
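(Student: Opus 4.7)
The plan is to deduce the non-vanishing of $L(\tfrac12,\pi'\otimes\eta_E)$ from the non-vanishing of a single toric period via Waldspurger's formula (Theorem \ref{thm:Waldspurger}), establishing the latter using Lemma \ref{lem:periodformula} combined with the $\mod 2$ congruence of Lemma \ref{lem:d4sum}. First, the hypothesis $\Delta_E\equiv 5\mod 8$ forces $E_2/\Q_2$ to be the unramified quadratic extension, so $\eta_{E,2}=\omega_2$. Either of the two possibilities $\pi_2\in\{\trep_2,\omega_2\trep_2\}$ then satisfies Condition \ref{cond:3} at $p=2$, the only finite place in $S(\pi)$; by Proposition \ref{prop:106}, $\Emb(\fo_E,\cO)$ is non-empty, so we may fix an optimal embedding $\iota_0$ and an element $a_0\in \iota_0(E)\cap V_D$ with $\Nm(a_0)=-\Delta_E$ as in Lemma \ref{lem:A}.

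Form the orbit $\Orb(\iota_0)=\{(\iota_j,a_j)\mid 1\le j\le h_E\}$ from \eqref{eq:orbit}. Lemma \ref{lem:periodformula} then yields
$$
\cP_{\iota_0,E}(\cL_{a_0}(\alg))=c\sum_{j=1}^{h_E}\alg\circ\cT(a_j),\qquad c>0.
$$
Since $\Delta_E\equiv 1\mod 4$, the congruence of Lemma \ref{lem:d4sum} gives $\sum_{j=1}^{h_E}\alg\circ\cT(a_j)\equiv h_E\mod 2$. Because $-\Delta_E$ is an odd prime, Gauss's genus theory forces the $2$-rank of $\Cl(\fo_E)$ to be zero, hence $h_E$ is odd. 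The displayed sum is therefore odd, and in particular $\cP_{\iota_0,E}(\cL_{a_0}(\alg))\neq 0$.

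To apply Waldspurger's formula I would verify Condition \ref{cond:1014} place by place: at $\infty$ by \eqref{eq:eps2}; at any prime $p\neq 2$ by unramifiedness of $\pi_p'$, which gives $\varepsilon(\tfrac12,\pi_{E_p}')=1$, matching $D_p\cong M_2(\Q_p)$; and at $p=2$ by \eqref{eq:eps3}, which yields $\varepsilon(\tfrac12,\pi_{E_2}')=-1$ in both cases $\pi_2'\in\{\St_2,\omega_2\St_2\}$ using $\eta_{E,2}=\omega_2$, matching the ramification of $D_2$. The converse direction of Theorem \ref{thm:Waldspurger} applied to $\cP_{\iota_0,E}(\cL_{a_0}(\alg))\neq 0$ then forces $L(\tfrac12,\pi')L(\tfrac12,\pi'\otimes\eta_E)\neq 0$, proving the theorem. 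The main substantive input is the mod-$4$ congruence of Lemma \ref{lem:d4sum}; once that is in hand, the remainder is a routine combination of genus theory, explicit local root number formulas, and Waldspurger's formula.
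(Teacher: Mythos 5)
Your proposal is correct and follows essentially the same route as the paper: Condition \ref{cond:3} via $\Delta_E\equiv 5\bmod 8$, Propositions \ref{prop:ep}--\ref{prop:106}, Lemma \ref{lem:periodformula}, the mod~$2$ congruence of Lemma \ref{lem:d4sum}, oddness of $h_E$ by genus theory, and Waldspurger. The only (harmless) deviation is that you avoid looking up the Atkin--Lehner sign to pin down $\pi_2'$, observing instead that for $E_2/\Q_2$ unramified both possibilities for $\pi_2$ satisfy Condition \ref{cond:3}.
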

\begin{proof}
For $p=2$, we find $\pi'_p=\omega_p\St_p$ by the Atkin-Lehner sign (cf. \cite{lmfdb}). 
By $S(\pi)=\{\inf,2\}$, Propositions \ref{prop:ep} and \ref{prop:106}, the condition $\Delta_E\equiv 5 \mod 8$ implies that $(E_v)_{v\in S(\pi)}$ satisfies Condition \ref{cond:3}, $\varepsilon(\tfrac12,\pi'\otimes\eta_E)=1$, and $\Emb(\fo_E,\cO)$ is not empty.
Hence, we can apply Lemma \ref{lem:d4sum} to evaluate $\sum_{j=1}^{h_E} \alg\circ \cT(a_j)$, and the value does not vanish, since one can show that $h_E$ is odd by the Gauss' genus theory (or Dirichlet's class number formula) and the assumptions on $\Delta_E$. 
Therefore, the assertion follows from Theorem \ref{thm:Waldspurger} and Lemma \ref{lem:periodformula}. 
\end{proof}

Since $S(\pi)=\{\inf,2\}$, it follows from \eqref{eq:eps1}, \eqref{eq:eps2} and \eqref{eq:eps3} that $\eps(\frac12,\pi'\otimes\eta_E)=1$ is equivalent to $\Delta_E\equiv 5 \mod 8$. 
Hence, Theorem \ref{thm:1} in the introduction is a direct consequence of Theorem \ref{thm:disc2}. 
Note that $\Delta_E\equiv 0$, $1$, $4\mod 8$ is equivalent to $\eps(\tfrac12,\pi'\otimes\eta_E)=-1$, which implies $L(\frac12,\pi'\otimes\eta_E)=0$. 
We expect that the condition $\Delta_E\equiv 5\mod 8$ is equivalent to the condition $L(\frac12,\pi'\otimes\eta_E)\neq 0$, even if $-\Delta_E$ is not a prime number. 
However, our method is not useful if $-\Delta_E$ is not a prime number,  since $h_E$ is always even by the genus theory in such a case.

\section{Modular forms for \texorpdfstring{$\disc(\cO)=6$ and $l=1$}{TEXT}}\label{sec:6}

In this section, we prove Theorem \ref{thm:2} by using two algebraic modular forms $\alg_1$ and $\alg_2$ of $l=1$ for $(\disc(D),\level(\cO))=(2,3)$ and $(3,2)$, respectively.

\subsection{Modular forms for \texorpdfstring{$\disc(D)=2$, $\level(\cO)=3$ and $l=1$}{TEXT}}\label{sec:23}

We consider the case $\disc(D)=2$ and $\level(\cO)=3$. 
In this case, a maximal order $\cO$ is given by
\[
\cO=\Z+\Z(i-j)+\Z(i-ij)+\Z w ,\quad i^2=j^2=-1,\quad ij=-ji, \quad w=\frac{1+i+j+ij}{2}.
\]
A maximal order for any Eichler order can be provided by MAGMA computer system \cite{BCP}.
The group $\Gamma$ is generated by $\overline{w}$, and hence we have $\#\Gamma=3$.
Take a basis of $L(\cO)$ as follows: 
\[
\bb_1=2(i-j), \;\; \bb_2=2(i-ij), \;\; \bb_3=i+j+ij. 
\]
By this basis, we see that
\[
\cF( \overline{w})=\begin{pmatrix} -1&1&0 \\ -1&0&0 \\ 0&0&1 \end{pmatrix}.
\]
Hence, we can find an algebraic modular form $\alg_1$ in $\cH_1^\Gamma$ which is explicitly given by $\alg_1(x)=x_3$.
Since $\dim S_4^\new(\Gamma_0(6))=1$ (cf. LMFDB \cite{lmfdb}), we have $\dim\cH_1^\Gamma=1$ by Theorem \ref{thm:jl}. 
Therefore, the modular form $\alg_1$ generates $\cH_1^\Gamma$, that is, $\cH_1^\Gamma=\Q \alg_1$. 
By Lemma \ref{lem:auto}, we have a unique irreducible automorphic representation $\pi_1=\otimes_v \pi_{1,v}$ of $G_\A$ such that $\cL_a(\alg_1)\in \pi_1$ for all $a\in \Q^3$. 
Let $\pi_1'=\otimes_v \pi'_{1,v}$ denote the automorphic representation of $\PGL_2(\A)$ generated by $\mathrm{JL}(\alg_1)$.

Take an imaginary quadratic field $E\in X(D)$ such that $\Emb(\fo_E,\cO)$ is not empty. 
As in \eqref{eq:orbit}, we take an orbit $\Orb(\iota_0)=\{(\iota_j,a_j)\mid 1\le j\le h_E\}$ so that $a_j\in \iota_j(E)\cap L(\cO)$ and $\Nm(a_j)=-\Delta_E$. 
For $a_j=a_{j1}\bb_1+a_{j2}\bb_2+a_{j3}\bb_3\in L(\cO)$ $(a_{j1}$, $a_{j2}$, $a_{j3}\in\Z)$ we have
\begin{equation}\label{eq:23disc}
-\Delta_E=\Nm(a_j)=8a_{j1}^2+8a_{j2}^2+3a_{j3}^2+8a_{j1}a_{j2}.
\end{equation}
Hence, one has
\begin{equation}\label{eq:cong23}
\alg_1\circ \cT(a_j)^2=a_{j3}^2\equiv -3\Delta_E \mod 8.    
\end{equation}
Note that $S(\pi)=\{\inf,2,3\}$, and by \cite{lmfdb} we see that $\pi'_{1,2}=\omega_2\St_2$ and $\pi'_{1,3}=\omega_3\St_3$. 
If $\Delta_E\equiv 5\mod 8$ and $\Delta_E\not\equiv 2\mod 3$, then $(E_v)_{v\in S(\pi)}$ satisfies Condition \ref{cond:3}, therefore we see that $\varepsilon(\tfrac12,\pi'\otimes\eta_E)=1$ and $\Emb(\fo_E,\cO)$ is not empty by Propositions \ref{prop:ep} and \ref{prop:106}. 
By \eqref{eq:cong23}, when $\Delta_E\equiv 13$ or $21 \mod 24$, we have
\begin{equation}\label{eq:23mod2}
\alg_1\circ\cT(a_j)\equiv 1 \mod 2. 
\end{equation}
Thus, from \eqref{eq:23mod2}, we obtain
\begin{equation}\label{eq:2}
\sum_{j=1}^{h_E}\alg_1\circ\cT(a_j)\equiv h_E\equiv 1 \mod 2
\end{equation}
when $\Delta_E\equiv 13$ or $21 \mod 24$ and $-\Delta_E$ is a prime.

\subsection{Modular forms for \texorpdfstring{$\disc(D)=3$, $\level(\cO)=2$ and $l=1$}{TEXT}}\label{sec:32}

Let us consider the case $\disc(D)=3$ and $\level(\cO)=2$. 
Since the following discussion is the same as in \S\ref{sec:23}, we omit some explanations and use the same notations.
By MAGMA \cite{BCP}, we obtain a maximal order $\cO$ as
\[
i^2=-1, \quad j^2=-3, \quad \cO=\Z+\Z i+\Z j+\Z \frac{1+i+j+ij}{2} .
\]
Then, we have $\Gamma=\langle \overline{i} \rangle$ $(\# \Gamma=2)$, and a basis of $L(\cO)$ is as follows:
\[
\bb_1=2i, \;\; \bb_2=2j, \;\; \bb_3=i+j+ij, \;\;  L(\cO)=\Z\bb_1+\Z\bb_2+\Z\bb_3.
\]
For this basis, we see that
\[
\cF( \overline{i})=\begin{pmatrix}1&0&0 \\ 0&-1&0 \\ 1&0&-1 \end{pmatrix}.
\]
Then it is easy to see that $\alg_2$ which is given by $\alg_2(x)=2x_1+x_3$ is an algebraic modular form in $\cH_1^\Gamma$, and we have $\cH_1^\Gamma=\C \alg_2$. 
We take an imaginary quadratic field  $E\in X(D)$ and suppose that $\Emb(\fo_E,\cO)$ is not empty. 
As in \eqref{eq:orbit}, we take an orbit $\Orb(\iota_0)=\{(\iota_j,a_j)\mid 1\le j\le h_E\}$ so that $a_j\in \iota_j(E)\cap L(\cO)$ and $\Nm(a_j)=-\Delta_E$. 
For $a_j=a_{j1}\bb_1+a_{j2}\bb_2+a_{j3}\bb_3\in L(\cO)$ $(a_{j1}$, $a_{j2}$, $a_{j3}\in\Z)$, we have
\[
-\Delta_E=\Nm(a_j)=4a_{j1}^2+12a_{j2}^2+7a_{j3}^2+4a_{j1}a_{j3}+12a_{j2}a_{j3}.
\]
Therefore, we obtain a congruence relation
\[
\alg_2\circ \cT(a_j)^2=(2a_{j1}+a_{j3})^2\equiv -\Delta_E \mod 6.
\]
Let $\pi_2'=\otimes_v \pi_{2,v}'$ (resp. $\pi_2$) denote the autormophic representation of $\PGL_2(\A)$ (resp. $G_\A$) generated by $\mathrm{JL}(\alg_2)$ (resp. $\cL_a(\alg_2)$). 
Since $\pi'_{2,2}=\omega_2\St_2$ and $\pi'_{2,3}=\omega_3\St_3$ (see \cite{lmfdb}), we find that $(E_v)_{v\in S(\pi_2)}$ satisfies Condition \ref{cond:3} if $\Delta_E\not\equiv 5\mod 8$ and $\Delta_E\equiv 2\mod 3$. 
Thus, if $-\Delta_E$ is a prime and $\Delta_E\equiv 17\mod 24$, we have
\begin{equation}\label{eq:3}
\sum_{j=1}^{h_E}\alg_2\circ\cT(a_j)\equiv 1 \mod 2.
\end{equation}

\subsection{Proof of Theorem \ref{thm:2}}

Since $\dim S_4^\new(\Gamma_0(6))=1$, we obtain $\pi_1'=\pi_2'$, which is denoted by $\pi'$.  
Hence, by \eqref{eq:2}, \eqref{eq:3}, Theorem \ref{thm:Waldspurger} and Lemma \ref{lem:periodformula}, we obtain the following theorem.
\begin{thm}\label{thm:725}
Let $E\in X(D)$ be an imaginary quadratic field. 
Suppose that $\Delta_E\equiv 13$, $17$ or $21 \mod 24$, and $-\Delta_E$ is a prime number. Then we have $L(\frac12,\pi'\otimes\eta_E)\neq 0$. 
\end{thm}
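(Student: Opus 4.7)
The plan is to apply Waldspurger's formula (Theorem \ref{thm:Waldspurger}) on two different definite quaternion algebras and reduce to the congruences already established in \S\ref{sec:23}--\S\ref{sec:32}. Since $\dim S_4^{\new}(\Gamma_0(6))=1$, the Jacquet--Langlands transfers $\pi_1'$ and $\pi_2'$ of the two algebraic modular forms $\alg_1$ and $\alg_2$ coincide with a single representation $\pi'$ of $\PGL_2(\A)$, so non-vanishing of a toric period on either quaternion side suffices to prove $L(\tfrac12,\pi'\otimes\eta_E)\neq 0$. I would split into two complementary cases according to $\Delta_E\bmod 24$.

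For $\Delta_E\equiv 13$ or $21\bmod 24$, I would work with the $(\disc(D),\level(\cO))=(2,3)$ algebra from \S\ref{sec:23}. The congruence conditions guarantee that $(E_v)_{v\in S(\pi_1)}$ satisfies Condition \ref{cond:3}, so by Propositions \ref{prop:ep} and \ref{prop:106} we have $\varepsilon(\tfrac12,\pi'\otimes\eta_E)=1$ and $\Emb(\fo_E,\cO)\neq\emptyset$. Lemma \ref{lem:periodformula} then expresses the toric period $\cP_{\iota_0,E}(\cL_{a_0}(\alg_1))$ as a positive multiple of $\sum_{j=1}^{h_E}\alg_1\circ\cT(a_j)$, which by \eqref{eq:2} is congruent to $h_E\bmod 2$; primality of $-\Delta_E$ forces $h_E$ odd via Gauss' genus theory, so the sum and hence the toric period is nonzero. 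Waldspurger's theorem now yields $L(\tfrac12,\pi')L(\tfrac12,\pi'\otimes\eta_E)\neq 0$, giving the conclusion. For $\Delta_E\equiv 17\bmod 24$, I would repeat the argument verbatim on the $(3,2)$ algebra from \S\ref{sec:32}, using the eigenform $\alg_2$ and the congruence \eqref{eq:3} in place of \eqref{eq:2}.

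The main conceptual point, rather than a technical obstacle, is that the two Eichler orders cover complementary residue classes modulo $24$: the $(2,3)$ algebra realizes the correct local root-number compatibilities exactly when $\Delta_E\equiv 13,21\bmod 24$, while the $(3,2)$ algebra covers $\Delta_E\equiv 17\bmod 24$. This dichotomy is what forces the use of two distinct quaternion algebras, and once it is identified the non-trivial inputs are the congruences \eqref{eq:2} and \eqref{eq:3} together with the explicit harmonic polynomials $\alg_1(x)=x_3$ and $\alg_2(x)=2x_1+x_3$; the remainder is a routine assembly of the machinery developed in \S\ref{sec:3}--\S\ref{sec:JL}.
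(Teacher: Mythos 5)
Your proposal is correct and follows essentially the same route as the paper: identify $\pi_1'=\pi_2'=\pi'$ via $\dim S_4^{\new}(\Gamma_0(6))=1$, split the residue classes $13,21\bmod 24$ (handled on the $(2,3)$ order via \eqref{eq:2}) and $17\bmod 24$ (handled on the $(3,2)$ order via \eqref{eq:3}), use oddness of $h_E$ from genus theory, and conclude by Lemma \ref{lem:periodformula} and Theorem \ref{thm:Waldspurger}. The paper's proof is just a terser statement of exactly this assembly.
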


Since $S(\pi)=\{\inf,2,3\}$, it follows from \eqref{eq:eps1}, \eqref{eq:eps2} and \eqref{eq:eps3} that the condition $\eps(\frac12,\pi'\otimes\eta_E)=1$ is equivalent to (i) or (ii):
\begin{itemize}
\item[(i)] $\Delta_E\equiv 5\mod 8$ and $\Delta_E\not\equiv 2\mod 3$. 
\item[(ii)] $\Delta_E\not\equiv 5\mod 8$ and $\Delta_E\equiv 2\mod 3$. 
\end{itemize}
Hence, when $\Delta_E$ is a prime number, we have (i) or (ii) if and only if $\Delta_E\equiv 13$, $17$ or $21 \mod 24$. 
Thus, Theorem \ref{thm:725} deduces Theorem \ref{thm:2}, which we mentioned in the introduction.

\section{Other cases}\label{sec:7}

In this section, we discuss other similar congruences and non-vanishing of toric periods.
Since the following discussion is similar to the previous ones, the details are omitted.

\subsection{Modular forms for \texorpdfstring{$\disc(D)=2$, $\level(\cO)=3$ and $l=2$}{TEXT}}

Recall the setting of \S\ref{sec:23}, that is, $\cO$, $\Gamma$, $\bb_1$, $\bb_2$, $\bb_3$ and so on are the same as in \S\ref{sec:23}. 
Here we discuss the case $l=2$, which is different from $l=1$ of \S\ref{sec:23}. 
By a direct calculation, we obtain the following results:
\[
\cQ=\begin{pmatrix}8&4&0 \\ 4&8&0 \\ 0&0&3 \end{pmatrix} , \quad \cQ^{-1}=\frac{1}{12}\begin{pmatrix}2&-1&0 \\ -1&2&0 \\ 0&0&4 \end{pmatrix},\quad
6 \Delta_\cQ=\frac{\partial^2}{\partial x_1^2}+\frac{\partial^2}{\partial x_2^2}+2\frac{\partial^2}{\partial x_3^2}-\frac{\partial^2}{\partial x_1\, \partial x_2}.
\]
We know $\dim \cH^\Gamma_2=1$ by Theorem \ref{thm:jl} and \cite{lmfdb}. 
Therefore, we obtain a basis $\alg$ in $\cH^\Gamma_2$ as
\[
\alg(x)=4(x_1^2+x_1x_2+x_2^2)-3x_3^2.
\]
Let $E\in X(D)$ be an imaginary quadratic field satisfying $\Emb(\fo_E,\cO)\neq \emptyset$. 
The notations $\iota_j$, $a_j$, $a_{j*}$ are the same as in \S\ref{sec:23}. 
Then, by \eqref{eq:23disc} we obtain a congruence
\begin{equation}\label{eq:23l2e1}
\alg\circ \cT(a_j)\equiv a_{j3}^2\equiv -3\Delta_E \mod 4.
\end{equation}
Let $\pi'=\otimes_v \pi_v'$ (resp. $\pi$) be the autormophic representation of $\PGL_2(\A)$ (resp. $G_\A$) generated by $\mathrm{JL}(\alg)$ (resp. $\cL_a(\alg)$). 
By the Atkin-Lehner signs (cf. \cite{lmfdb}), we find $\pi'_2=\St_2$, $\pi'_3=\omega_3\St_3$. 
Hence, $(E_v)_{v\in S(\pi)}$ satisfies Condition \ref{cond:3} if $\Delta_E\not\equiv 1\mod 8$ and $\Delta_E\not\equiv 2\mod 3$. 
Therefore, when $\Delta_E\equiv 13$ or $21 \mod 24$,
one has
\begin{equation}\label{eq:23l2e2}
\sum_{j=1}^{h_E}\alg\circ\cT(a_j)\equiv h_E \mod 4.
\end{equation}
When $-\Delta_E$ is a prime and $\Delta_E\equiv 13$ or $21 \mod 24$, we have by \eqref{eq:23l2e1} and \eqref{eq:23l2e2} 
\begin{equation}\label{eq:23l2e3}
\sum_{j=1}^{h_E}\alg\circ\cT(a_j)\equiv 1 \mod 2.    
\end{equation}

In this case, we also obtain a different non-vanishing result.
Suppose that $E\in X(D)$ satisfies $h_E \equiv 2 \mod 4$, and $\Delta_E\equiv 13$ or $21 \mod 24$. 
Then by \eqref{eq:23l2e1} and \eqref{eq:23l2e2} we get the following congruence (non-vanishing):
\begin{equation}\label{eq:23l2e4}
\sum_{j=1}^{h_E}\alg\circ\cT(a_j)\equiv 2 \mod 4.
\end{equation}
The existence of such an $E\in X(D)$ can be seen in the paper \cite{Pizer}. 
By \cite{Pizer}*{Equation (8) in Proposition 3}, we have $h_E\equiv 2\mod 4$ if $\Delta_E=-3p \equiv 21 \mod 24$ for a prime $p$.  
By \cite{Pizer}*{Equation (14) in Proposition 4}, we have $h_E\equiv 2\mod 4$ if $\Delta_E=-pq \equiv 13$ or $21 \mod 24$ for primes $p$, $q$ such that $p\equiv 1\mod 4$, $q\equiv 3\mod 4$, and ${\displaystyle \left( \frac{p}{q}\right)=-1}$.  
Therefore, by \eqref{eq:23l2e3}, \eqref{eq:23l2e4} and \cite{Pizer} we obtain the following non-vanishing result.
\begin{thm}
Let $E\in X(D)$ be an imaginary quadratic field. 
Suppose one of the following three conditions:
\begin{itemize}
    \item $\Delta_E\equiv 13$ or $21 \mod 24$, and $-\Delta_E$ is a prime number.
    \item $\Delta_E=-3p \equiv 21 \mod 24$, where $p$ is a prime. 
    \item $\Delta_E=-pq \equiv 13$ or $21 \mod 24$, where $p$ and $q$ are primes and they satisfy $p\equiv 1\mod 4$, $q\equiv 3\mod 4$, and ${\displaystyle \left( \frac{p}{q}\right)=-1}$. 
\end{itemize}
Then we obtain $L(\frac12,\pi'\otimes\eta_E)\neq 0$.
\end{thm}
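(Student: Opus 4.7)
The plan is to follow the blueprint used for Theorems \ref{thm:disc2} and \ref{thm:725}: combine Theorem \ref{thm:Waldspurger} with the toric-period formula of Lemma \ref{lem:periodformula} to reduce the non-vanishing of $L(\frac12,\pi'\otimes\eta_E)$ to the non-vanishing of the finite sum $\sum_{j=1}^{h_E}\alg\circ\cT(a_j)$, and then bound this sum below using the congruences \eqref{eq:23l2e1}--\eqref{eq:23l2e4} already established just above.

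First, I would verify that the local root-number and embedding conditions are met. In this setting $S(\pi)=\{\infty,2,3\}$, $\pi'_2=\St_2$ and $\pi'_3=\omega_3\St_3$, and each of the three hypotheses on $\Delta_E$ forces $\Delta_E\equiv 13$ or $21\mod 24$. A direct application of \eqref{eq:eps1}--\eqref{eq:eps3} together with Propositions \ref{prop:ep} and \ref{prop:106} then shows that $(E_v)_{v\in S(\pi)}$ satisfies Condition \ref{cond:3}, that $\varepsilon(\frac12,\pi'\otimes\eta_E)=1$, and that $\Emb(\fo_E,\cO)\neq\emptyset$. In particular, an orbit $\Orb(\iota_0)$ exists and Lemma \ref{lem:periodformula} identifies the toric period of $\cL_{a_0}(\alg)$ with a positive scalar multiple of $\sum_{j=1}^{h_E}\alg\circ\cT(a_j)$.

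I would then dispatch the three cases separately. If $-\Delta_E$ is prime, Gauss's genus theory forces $h_E$ to be odd, so \eqref{eq:23l2e3} gives $\sum_{j=1}^{h_E}\alg\circ\cT(a_j)\equiv 1\mod 2$, which is nonzero. In the other two cases $h_E$ is automatically even, so the mod-$2$ information is useless, and one must invoke the sharper mod-$4$ congruence \eqref{eq:23l2e2}, which reads $\sum_{j=1}^{h_E}\alg\circ\cT(a_j)\equiv h_E\mod 4$. The needed input is $h_E\equiv 2\mod 4$; this is supplied by \cite{Pizer}*{Propositions 3 and 4}, covering exactly the two factorization types $\Delta_E=-3p\equiv 21\mod 24$ and $\Delta_E=-pq\equiv 13$ or $21\mod 24$ (with the specified residues of $p,q$ modulo $4$ and the Legendre-symbol condition $\left(\frac{p}{q}\right)=-1$) listed in the statement. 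Thus the sum is $\equiv 2\mod 4$ and in particular nonzero, and Theorem \ref{thm:Waldspurger} combined with Lemma \ref{lem:periodformula} delivers $L(\frac12,\pi'\otimes\eta_E)\neq 0$.

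The main obstacle is not the toric-period machinery, which is routine at this stage, but the fact that the mod-$2$ relation \eqref{eq:23l2e3} is powerless once $h_E$ is even. The observation that saves the non-prime cases is that, because $\alg$ is a quadratic form in this weight-six setting, $\alg\circ\cT(a_j)$ is congruent to the square $a_{j3}^2\mod 4$; coupled with $\Delta_E\equiv 5\mod 8$ this forces each term to be $\equiv 1\mod 4$, upgrading \eqref{eq:23l2e3} to the sharper \eqref{eq:23l2e2}, after which Pizer's class-group computations supply the required supply of admissible discriminants.
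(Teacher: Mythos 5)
Your proposal is correct and follows essentially the same route as the paper: reduce to the non-vanishing of $\sum_{j=1}^{h_E}\alg\circ\cT(a_j)$ via Theorem \ref{thm:Waldspurger} and Lemma \ref{lem:periodformula}, use the congruence $\alg\circ\cT(a_j)\equiv a_{j3}^2\equiv 1\bmod 4$ to get $\sum_j\alg\circ\cT(a_j)\equiv h_E\bmod 4$, and then conclude by $h_E$ odd (genus theory) in the prime-discriminant case and by $h_E\equiv 2\bmod 4$ (Pizer's Propositions 3 and 4) in the two composite cases. No gaps.
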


\subsection{Modular forms for \texorpdfstring{$\disc(\cO)=3$ and $l=2$}{TEXT}}

Let us consider the case $\disc(\cO)=3$. 
By MAGMA \cite{BCP}, a maximal order $\cO$ is given by 
\[
\cO=\Z+\Z i+\Z w+\Z iw, \qquad i^2=-1, \quad j^2=-3, \quad  w=\frac{1+j}{2}.
\]
From this fact, it is easy to see $\Gamma=\langle  \overline{i}, \;\; \overline{ w} \rangle \cong S_3$. 
Choose a $\Z$-basis of $L(\cO)$ as  
\[
\bb_1=j, \;\; \bb_2=i+ij, \;\; \bb_3=i-ij, \;\;  L(\cO)=\Z\bb_1+\Z\bb_2+\Z\bb_3.
\]
By a direct calculation, we see that
\[
\cQ=\begin{pmatrix} 3 & 0 & 0 \\ 0& 4 & -2 \\ 0 & -2 & 4 \end{pmatrix} , \qquad  \cQ^{-1}= \frac{1}{6} \, \begin{pmatrix} 2 & 0 & 0 \\ 0& 2 & 1 \\ 0 & 1 & 2 \end{pmatrix},
\]
and therefore
\[
3 \Delta_\cQ= \frac{\partial^2}{\partial x_1^2} + \frac{\partial^2}{\partial x_2^2} + \frac{\partial^2}{\partial x_3^2} + \frac{\partial^2}{\partial x_2\, \partial x_3} .
\]
Furthermore, we calculate
\[
\cF( \overline{i})=\begin{pmatrix}-1&0&0 \\ 0&0&1 \\ 0&1&0 \end{pmatrix}, \quad \cF(\overline{w})= \begin{pmatrix} 1&0&0 \\ 0&0&1 \\ 0&-1&-1 \end{pmatrix}.
\]
By \cite{lmfdb} and Theorem \ref{thm:jl}, we have $\dim \cH_2^\Gamma=\dim S_6^\new(\Gamma_0(3))=1$. 
The algebraic modular form $\varphi$ which is defined by $\alg(x)=3x_1^2-2x_2^2-2x_3^2+2x_2x_3$ gives rise to a basis of this space.

Take an element $E\in X(D)$ and suppose $\Emb(\fo_E,\cO)\neq \emptyset$. 
We also take an orbit $\Orb(\iota_0)=\{(\iota_j,a_j)\mid 1\le j\le h_E\}$ as in \eqref{eq:orbit}, and therefore we have $a_j\in \iota_j(E)\cap L(\cO)$ and $\Nm(a_j)=-\Delta_E$. 
For $a_j=a_{j1}\bb_1+a_{j2}\bb_2+a_{j3}\bb_3\in L(\cO)$ $(a_{j1}$, $a_{j2}$, $a_{j3}\in\Z)$ we have
\[
-\Delta_E=\Nm(a)=3a_{j1}^2+4a_{j2}^2+4a_{j3}^2-4a_{j2}a_{j3}.
\]
Hence, one obtains a modulo $6$ congruence relation
\begin{equation}\label{eq:3l2e1}
\alg\circ\cT(a_j)\equiv -\Delta_E \mod 6.    
\end{equation}
We write $\pi'=\otimes_v\pi'_v$ (resp. $\pi$) for the automorphic representation of $\PGL_2(\A)$ (resp. $G_\A$) generated by $\mathrm{JL}(\alg)$ (resp. $\cL_a(\alg)$).
By the Atkin-Lehner signs (cf. \cite{lmfdb}), we see that $\pi'_3=\St_3$ in this case. 
Hence, $(E_v)_{v\in S(\pi)}$ satisfies Condition \ref{cond:3} if $\Delta_E \equiv 0$ or $2\mod 3$. 
When $-\Delta_E\equiv 1\mod 3$, we have
\begin{equation}\label{eq:3l2e2}
\sum_{j=1}^{h_E}\alg\circ\cT(a_j)\equiv h_E \mod 3
\end{equation}
by \eqref{eq:3l2e1}.
From \eqref{eq:3l2e2} we derive the following.
\begin{thm}\label{thm:3l2}
    Let $E\in X(D)$ be an imaginary quadratic field. If $\Delta_E \equiv 2\mod 3$ and $h_E\not\equiv 0\mod 3$, then $L(\frac12,\pi'\otimes\eta_E)\neq 0$. 
\end{thm}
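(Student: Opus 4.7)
The plan is to follow exactly the template used in Theorems \ref{thm:disc2} and \ref{thm:725}: reduce the $L$-value non-vanishing to non-vanishing of a toric period via Waldspurger (Theorem \ref{thm:Waldspurger}), express that period through Lemma \ref{lem:periodformula} as a sum over the $\operatorname{Cl}(\fo_E)$-orbit, and finally invoke the mod-$3$ piece of the already-established congruence \eqref{eq:3l2e1}.

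First I would check that the hypothesis $\Delta_E\equiv 2\mod 3$ ensures $(E_v)_{v\in S(\pi)}$ satisfies Condition \ref{cond:3}. Since $\disc(\cO)=3$, we have $S(\pi)=\{\infty,3\}$; the infinite place is automatic because $E$ is imaginary, and at $p=3$ we have $\pi'_3=\St_3$, which under the Jacquet--Langlands correspondence corresponds to $\pi_3=\trep_3$. Condition \ref{cond:3} therefore requires that $E_3$ not be split over $\Q_3$. The assumption $\Delta_E\equiv 2\mod 3$ gives $\left(\tfrac{\Delta_E}{3}\right)=-1$, so $3$ is inert in $E$ and $E_3/\Q_3$ is unramified. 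Consequently Condition \ref{cond:3} holds, and Propositions \ref{prop:ep} and \ref{prop:106} yield $\varepsilon(\tfrac12,\pi'\otimes\eta_E)=1$ together with $\Emb(\fo_E,\cO)\neq\emptyset$.

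Next, I would fix an optimal embedding $\iota_0\in\Emb(\fo_E,\cO)$ and the associated element $a_0\in\iota_0(E)\cap V_D$ with $\Nm(a_0)=-\Delta_E$ as in Lemma \ref{lem:A}, form the orbit $\Orb(\iota_0)=\{(\iota_j,a_j)\mid 1\le j\le h_E\}$ as in \eqref{eq:orbit}, and apply Lemma \ref{lem:periodformula} to write
\[
\cP_{\iota_0,E}(\cL_{a_0}(\alg))= c\sum_{j=1}^{h_E}\alg\circ\cT(a_j),\qquad c>0.
\]
The congruence \eqref{eq:3l2e1}, namely $\alg\circ\cT(a_j)\equiv-\Delta_E\mod 6$, together with the identity $-\Delta_E\equiv 1\mod 3$ (which is exactly the reformulation of $\Delta_E\equiv 2\mod 3$), gives \eqref{eq:3l2e2}, i.e.\ $\sum_{j=1}^{h_E}\alg\circ\cT(a_j)\equiv h_E\mod 3$. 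By hypothesis $h_E\not\equiv 0\mod 3$, so this sum is non-zero, whence $\cP_{\iota_0,E}(\cL_{a_0}(\alg))\neq 0$. Applying Theorem \ref{thm:Waldspurger} one concludes $L(\tfrac12,\pi')L(\tfrac12,\pi'\otimes\eta_E)\neq 0$, and in particular $L(\tfrac12,\pi'\otimes\eta_E)\neq 0$.

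There is essentially no obstacle: all the substantive work (the explicit maximal order, the $\Z$-basis of $L(\cO)$, the harmonic generator $\alg$, the local root-number calculation identifying $\pi_3'=\St_3$, and the mod-$6$ congruence) has already been carried out in \S\ref{sec:7}. The only point requiring any thought is the compatibility between the arithmetic hypothesis $\Delta_E\equiv 2\mod 3$ and Condition \ref{cond:3} at $p=3$, and this is settled by a single Legendre symbol computation as above.
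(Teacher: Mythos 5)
Your proposal is correct and follows the same route as the paper: Condition \ref{cond:3} at $p=3$ (where $\pi'_3=\St_3$, so $E_3$ must be non-split, which $\Delta_E\equiv 2\bmod 3$ guarantees since $3$ is then inert), Proposition \ref{prop:106} for $\Emb(\fo_E,\cO)\neq\emptyset$, the congruence \eqref{eq:3l2e1} reduced mod $3$ to get $\sum_j\alg\circ\cT(a_j)\equiv h_E\bmod 3$, and then Lemma \ref{lem:periodformula} with Theorem \ref{thm:Waldspurger}. This is exactly the derivation the paper leaves implicit after \eqref{eq:3l2e2}.
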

The indivisibility of class numbers by $3$ was used to show the weak Goldfeld conjecture, see \cites{J,KL,SWY2}. 
Theorem \ref{thm:3l2} similarly suggests that $\pi'$ satisfies the weak Goldfeld conjecture.

\subsection{Modular forms for \texorpdfstring{$\disc(D)=2$, $\level(\cO)=5$ and $l=1$}{TEXT}}

Finally, we consider the case that $\disc(D)=2$ and $\level(\cO)=5$. 
Using MAGMA \cite{BCP}, one has a maximal order $\cO$ as
\[
\cO=\Z+\Z\, ij + \Z (i+2j) +\Z \frac{1+3i+j+ij}{2} , \qquad i^2=j^2=-1.
\]
We can choose a $\Z$-basis of $L(\cO)$ as
\[
\bb_1=2ij, \;\; \bb_2=2i+4j, \;\; \bb_3=3i+j+ij, \;\;  L(\cO)=\Z\bb_1+\Z\bb_2+\Z\bb_3.
\]
By a direct calculation, we see that $\Gamma=\langle \, \overline{i\, j} \, \rangle$, $\cF(\overline{ij})=\begin{pmatrix} 1&0&0 \\ 0&-1&0 \\ 1&0&-1 \end{pmatrix}$, and we have an algebraic modular form $\alg(x)=2x_1+x_3$ in $\cH_1^\Gamma$. 
By \cite{lmfdb} and Theorem \ref{thm:jl}, we have $\dim \cH_1^\Gamma=\dim S_4^\new(\Gamma_0(10))=1$, and hence $\alg$ gives a basis of $\cH_1^\Gamma$.  

For an imaginary quadratic field $E\in X(D)$ satisfying $\Emb(\fo_E,\cO)\neq \emptyset$, 
an orbit $\Orb(\iota_0)=\{(\iota_j,a_j)\mid 1\le j\le h_E\}$ is taken as in \eqref{eq:orbit}. 
Since $a_j\in \iota_j(E)\cap L(\cO)$ and $\Nm(a_j)=-\Delta_E$, we have
\[
-\Delta_E=\Nm(a_j)=4a_{j1}^2+20a_{j2}^2+11a_{j3}^2+20a_{j2}a_{j3}+4a_{j1}a_{j3}
\]
for $a_j=a_{j1}\bb_1+a_{j2}\bb_2+a_{j3}\bb_3\in L(\cO)$ $(a_{j1}$, $a_{j2}$, $a_{j3}\in\Z)$. 
Then, we find
\begin{equation}\label{eq:10l1e1}
\alg\circ \cT(a_j)^2=(2a_{j1}+a_{j3})^2\equiv -\Delta_E \mod 10.    
\end{equation}
Write $\pi'=\otimes_v\pi'_v$ (resp. $\pi$) for the automorphic representation of $\PGL_2(\A)$ (resp. $G_\A$) generated by $\mathrm{JL}(\alg)$ (resp. $\cL_a(\alg)$).
By the Atkin-Lehner signs (cf. \cite{lmfdb}), we get $\pi'_2=\St_2$ and $\pi'_5=\St_5$. 
This means that $(E_v)_{v\in S(\pi)}$ satisfies Condition \ref{cond:3} if $\Delta_E\not\equiv 1\mod 8$ and $\Delta_E\equiv \pm 1\mod 5$. 
When $\Delta_E\equiv 21$ or $29\mod 40$, it follows from \eqref{eq:10l1e1} that
\begin{equation}\label{eq:10l1e2}
\sum_{j=1}^{h_E}\alg\circ\cT(a_j)\equiv h_E \mod 2.
\end{equation}
Hence, by \eqref{eq:10l1e2} we obtain the following result.
\begin{thm}\label{thm:10l1}
    Let $E\in X(D)$ be an imaginary quadratic field. Suppose that $-\Delta_E$ is a prime number and $\Delta_E\equiv 21$ or $29\mod 40$. 
    Then, we have $L(\frac12,\pi'\otimes\eta_E)\neq 0$. 
\end{thm}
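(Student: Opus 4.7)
The plan is to run the same toric-period argument as in Theorems \ref{thm:disc2} and \ref{thm:725}, applied to the modular form $\alg(x)=2x_1+x_3$ constructed just above the statement. By Theorem \ref{thm:Waldspurger} combined with Lemma \ref{lem:periodformula}, it suffices to produce an optimal embedding $\iota_0\in\Emb(\fo_E,\cO)$ for which the sum $\sum_{j=1}^{h_E}\alg\circ\cT(a_j)$ is nonzero, provided also that the local root numbers of $\pi'_{E_v}$ at places in $S(\pi)=\{\infty,2,5\}$ are compatible with the ramification of $D$.

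The first task is to check the hypotheses of Propositions \ref{prop:ep} and \ref{prop:106}. The Atkin--Lehner signs (from LMFDB) give $\pi'_2=\St_2$ and $\pi'_5=\St_5$, which via Jacquet--Langlands translates to $\pi_2=\trep_2$ at the division place and $\pi_5=\St_5$ at the Eichler place. The assumption $\Delta_E\equiv 21$ or $29\mod 40$ yields $\Delta_E\equiv 5\mod 8$ and $\Delta_E\equiv \pm1\mod 5$. Hensel's lemma then tells us that $E_2$ is the unramified quadratic extension of $\Q_2$ (hence not split, as required for $\pi_2=\trep_2$ at $p=2\mid\disc(D)$), while $\Delta_E$ is a square in $\Z_5^\times$ so $E_5$ is split (as required for $\pi_5=\St_5$ at $5\mid\level(\cO)$). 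Thus Condition \ref{cond:3} holds, Proposition \ref{prop:ep} gives $\eps(\tfrac12,\pi'\otimes\eta_E)=1$, and Proposition \ref{prop:106} ensures $\Emb(\fo_E,\cO)\neq\emptyset$, so an orbit $\Orb(\iota_0)$ is available.

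The second task is the mod $2$ reduction of the congruence \eqref{eq:10l1e1}, which already reads $\alg\circ\cT(a_j)^2=(2a_{j1}+a_{j3})^2\equiv -\Delta_E\mod 10$. Since $-\Delta_E$ is an odd prime, reduction modulo $2$ forces $2a_{j1}+a_{j3}$ to be odd, so $\alg\circ\cT(a_j)\equiv 1\mod 2$ for every $j$, whence
\[
\sum_{j=1}^{h_E}\alg\circ\cT(a_j)\equiv h_E \mod 2.
\]
To finish, I would invoke Gauss's genus theory: under our hypotheses $\Delta_E=-p$ is a fundamental discriminant with a single prime divisor and $p\equiv 3\mod 4$, so the $2$-rank of $\Cl(\fo_E)$ is zero and $h_E$ is odd. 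Therefore the sum is nonzero, Lemma \ref{lem:periodformula} gives $\cP_{\iota_0,E}(\cL_{a_0}(\alg))\neq 0$, and Theorem \ref{thm:Waldspurger} concludes $L(\tfrac12,\pi')L(\tfrac12,\pi'\otimes\eta_E)\neq 0$, which in particular proves the desired non-vanishing.

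This is essentially a mechanical analogue of the proof of Theorem \ref{thm:disc2}, so I expect no genuine obstacle. The one place where care is needed is the bookkeeping at both bad primes $p=2$ and $p=5$ simultaneously: one must confirm that the Atkin--Lehner signs at each prime produce the untwisted Steinberg representation (rather than the $\omega_p$-twist), because this dictates the precise congruence classes modulo $40$ for which Condition \ref{cond:3} is satisfied and hence for which the method applies.
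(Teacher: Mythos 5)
Your argument is correct and follows the paper's proof of Theorem \ref{thm:10l1} essentially verbatim: the same congruence $(2a_{j1}+a_{j3})^2\equiv-\Delta_E\bmod 10$ reduced mod $2$, the same verification of Condition \ref{cond:3} from $\pi'_2=\St_2$, $\pi'_5=\St_5$ and $\Delta_E\equiv 21,29\bmod 40$, and the same appeal to genus theory, Lemma \ref{lem:periodformula} and Theorem \ref{thm:Waldspurger}. No gaps.
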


\begin{bibdiv}
\begin{biblist}

\if0
\bib{ABF}{article}{
   author={Antoniadis, J. A.},
   author={Bungert, M.},
   author={Frey, G.},
   title={Properties of twists of elliptic curves},
   journal={J. Reine Angew. Math.},
   volume={405},
   date={1990},
   pages={1--28},
   issn={0075-4102},
   review={\MR{1040993}},
   doi={10.1515/crll.1990.405.1},
}
\fi

\bib{BCP}{article}{
    author= {Bosma, Wieb}, 
    author={Cannon, John},  
    author={Playoust, Catherine},
    title={The Magma algebra system. I. The user language},
   note={Computational algebra and number theory (London, 1993)},
   journal={J. Symbolic Comput.},
   volume={24},
    year={1997},
    number={3-4},
    pages={235--265},
    issn={0747-7171},
   review={\MR{1484478}},
     doi={10.1006/jsco.1996.0125},
    url= {http://dx.doi.org/10.1006/jsco.1996.0125},
}

\bib{CKRS}{article}{
   author={Conrey, J.},
   author={Keating, J.},
   author={Rubinstein, M.},
   author={Snaith, N.},
   title={On the frequency of vanishing of quadratic twists of modular $L$-functions},
   journal={ In Number Theory for the Millennium I: Proceedings of the Millennial Conference on Number Theory; editor, M.A. Bennett et al.},
   volume={},
   date={2002},
   publisher={A K Peters, Ltd, Natick},
   number={},
   pages={301--315},
}

\bib{CKRS06}{article}{
   author={Conrey, J. B.},
   author={Keating, J. P.},
   author={Rubinstein, M. O.},
   author={Snaith, N. C.},
   title={Random matrix theory and the Fourier coefficients of
   half-integral-weight forms},
   journal={Experiment. Math.},
   volume={15},
   date={2006},
   number={1},
   pages={67--82},
   issn={1058-6458},
   review={\MR{2229387}},
}

\bib{CS}{book}{
   author={Conway, John H.},
   author={Smith, Derek A.},
   title={On quaternions and octonions: their geometry, arithmetic, and
   symmetry},
   publisher={A K Peters, Ltd., Natick, MA},
   date={2003},
   pages={xii+159},
   isbn={1-56881-134-9},
   review={\MR{1957212}},
}

\bib{Delaunay}{article}{
   author={Delaunay, Christophe},
   title={Note on the frequency of vanishing of $L$-functions of elliptic
   curves in a family of quadratic twists},
   conference={
      title={Ranks of elliptic curves and random matrix theory},
   },
   book={
      series={London Math. Soc. Lecture Note Ser.},
      volume={341},
      publisher={Cambridge Univ. Press, Cambridge},
   },
   isbn={978-0-521-69964-8},
   isbn={0-521-69964-9},
   date={2007},
   pages={195--200},
   review={\MR{2322345}},
   doi={10.1017/CBO9780511735158.011},
}

\if0
\bib{Ibukiyama}{article}{
   author={Ibukiyama, Tomoyoshi},
   title={On maximal orders of division quaternion algebras over the
   rational number field with certain optimal embeddings},
   journal={Nagoya Math. J.},
   volume={88},
   date={1982},
   pages={181--195},
   issn={0027-7630},
   review={\MR{683249}},
   doi={10.1017/S002776300002016X},
}
\fi

\bib{J}{article}{
   author={James, Kevin},
   title={$L$-series with nonzero central critical value},
   journal={J. Amer. Math. Soc.},
   volume={11},
   date={1998},
   number={3},
   pages={635--641},
}

\bib{K}{article}{
   author={Kohnen, Winfried},
   title={On the proportion of quadratic character twists of L-functions attached to cusp forms not vanishing at the central point},
   journal={J. reine angew. Math.},
   volume={508},
   date={1999},
   number={},
   pages={179--187},
}

\bib{KZ}{article}{
   author={Kohnen, Winfried},
   author={Zagier, Don},
   title={Values of $L$-series of modular forms at the center of the critical strip},
   journal={Invent. Math.},
   volume={64},
   date={1981},
   number={},
   pages={175--198},
}

\bib{KR}{article}{
   author={Knapp, A. W.},
   author={Rogawski, J. D.},
   title={Applications of the trace formula},
   conference={
      title={Representation theory and automorphic forms},
      address={Edinburgh},
      date={1996},
   },
   book={
      series={Proc. Sympos. Pure Math.},
      volume={61},
      publisher={Amer. Math. Soc., Providence, RI},
   },
   isbn={0-8218-0609-2},
   date={1997},
   pages={413--431},
   review={\MR{1476507}},
   doi={10.1090/pspum/061/1476507},
}

\bib{KL}{article}{
   author={Kriz, Daniel},
   author={Li, Chao},
   title={Goldfeld's conjecture and congruences between Heegner points},
   journal={Forum Math. Sigma},
   volume={7},
   date={2019},
   number={},
   pages={Paper No. e15, 80 pp.},
}

\bib{lmfdb}{misc}{
  author = {The {LMFDB Collaboration}},
  title = {The {L}-functions and modular forms database, \url{https://www.lmfdb.org}},
  year = {2023},
}

\bib{M}{article}{
   author={Makiyama, Kenji},
   title={On the proportion of quadratic twists for non-vanishing and vanishing central values of $L$-functions attached to newforms},
   journal={Kumamoto J. Math.},
   volume={28},
   date={2015},
   number={},
   pages={11--28},
}

\bib{MO}{article}{
   author={McGraw, William},
   author={Ono, Ken},
   title={Modular form congruences and Selmer groups},
   journal={J. London Math. Soc.},
   volume={67},
   date={2003},
   number={2},
   pages={302--318},
}

\bib{OS}{article}{
   author={Ono, Ken},
   author={Skinner, Christopher},
   title={Nonvanishing of quadratic twists of modular $L$-functions},
   journal={Invent. Math.},
   volume={134},
   date={1998},
   number={},
   pages={651--660},
}

\bib{Pizer}{article}{
   author={Pizer, Arnold},
   title={On the $2$-part of the class number of imaginary quadratic number
   fields},
   journal={J. Number Theory},
   volume={8},
   date={1976},
   number={2},
   pages={184--192},
   issn={0022-314X},
   review={\MR{0406975}},
   doi={10.1016/0022-314X(76)90100-1},
}

\bib{Saito}{article}{
   author={Saito, Hiroshi},
   title={On Tunnell's formula for characters of ${\rm GL}(2)$},
   journal={Compositio Math.},
   volume={85},
   date={1993},
   number={1},
   pages={99--108},
   issn={0010-437X},
   review={\MR{1199206}},
}

\bib{Schmidt}{article}{
   author={Schmidt, Ralf},
   title={Some remarks on local newforms for $\rm GL(2)$},
   journal={J. Ramanujan Math. Soc.},
   volume={17},
   date={2002},
   number={2},
   pages={115--147},
   issn={0970-1249},
   review={\MR{1913897}},
}

\bib{Shimizu}{article}{
   author={Shimizu, Hideo},
   title={Theta series and automorphic forms on ${\rm GL}\sb{2}$},
   journal={J. Math. Soc. Japan},
   volume={24},
   date={1972},
   pages={638--683},
   issn={0025-5645},
   review={\MR{0333081}},
   doi={10.2969/jmsj/02440638},
}

\bib{SWY2}{article}{
  title={Distribution of toric periods of modular forms on definite quaternion algebras},
  author={Suzuki, Miyu},
  author={Wakatsuki, Satoshi},
  author={Yokoyama, Shun'ichi},
  journal={Res. number theory},
  volume={8},
  date={2022},
  number={4},
  pages={Paper No.90, 33 pp},
}

\bib{Tunnell}{article}{
   author={Tunnell, Jerrold B.},
   title={Local $\epsilon $-factors and characters of ${\rm GL}(2)$},
   journal={Amer. J. Math.},
   volume={105},
   date={1983},
   number={6},
   pages={1277--1307},
   issn={0002-9327},
   review={\MR{0721997}},
   doi={10.2307/2374441},
}

\bib{Voight}{book}{
  title={Quaternion algebras},
  author={Voight,  John},
  booktitle={},
  pages={885+xxiii},
  year={2021},
  publisher={Springer,  Cham},
  issn={978-3-030-56692-0},
  review={\MR{4279905}},
  doi={10.1007/978-3-030-56694-4},
}

\bib{Voight2}{webpage}{
  author={Voight,  John},
  title={home page},
  url={https://math.dartmouth.edu/~jvoight/defeichler-tables/index.html},
}

\bib{Wal2}{article}{
   author={Waldspurger, J.-L.},
   title={Sur les valeurs de certaines fonctions $L$ automorphes en leur centre de sym\'etrie},
   journal={Compositio Math.},
   volume={54},
   date={1985},
   number={2},
   pages={173--242},
   issn={0010-437X},
   review={\MR{783511}},
}

\end{biblist}
\end{bibdiv}

\end{document}